%
%
\documentclass{amsart}

\numberwithin{equation}{section}

\usepackage{hyperref}

\usepackage{graphicx,color,graphics}
\usepackage{latexsym} 
\usepackage{tikz}
\newtheorem{theorem}{Theorem}[section]

\theoremstyle{definition}
\newtheorem{definition}[theorem]{Definition}

\theoremstyle{remark}

\newtheorem{lemma}[theorem]{Lemma} 
\newtheorem{proposition}[theorem]{Proposition}

\def\Sat{\operatorname{Sat}}

\def\mys{\sigma}
\def\digr{{\mathcal G}}
\def\crit{\digr^c}

\def\R{\mathbb{R}}

\def\Digr{{\mathcal G}}


\if{

}\fi

\begin{document}

\title[An application of the max-plus spectral theory]{An application of the max-plus spectral theory to an ultradiscrete analogue of the Lax pair}

\author{Serge\u{\i} Sergeev}
\address{University of Birmingham, School of Mathematics,
Birmingham, Edgbaston B15 2TT}


\email{sergiej@gmail.com}
\thanks{The work is supported by EPSRC Grant RRAH15735 and RFBR-CRNF grant 11-01-93106.
It was initiated when the author was with the Max-Plus Team at INRIA and CMAP \'Ecole Polytechnique, France.}

\subjclass[2010]{Primary 15A80, 15A18; Secondary 37K99}

\begin{abstract}
We study the ultradiscrete analogue of Lax pair 
proposed by Willox et al.\cite{Wil+}. This ``pair'' is a max-plus linear
system comprising four equations. Our starting point is to treat this system
as a combination of two max-plus eigenproblems, with two additional constraints.
Though infinite-dimensional, these two eigenproblems can be treated by means of the ``standard''
max-plus spectral theory. In particular, any solution to the system can be described as a max-linear combination
of fundamental eigenvectors associated with each soliton. We then describe the operation of undressing
using pairs of fundamental eigenvectors. We also study the solvability
of the complete system of four equations as proposed by Willox et al.~\cite{Wil+}.
\end{abstract}

\maketitle

\section{Introduction}
\subsection{Motivations and purposes}
We consider the system of four equations
\begin{equation}
\label{e:willox}
\begin{split}
&\max(\Phi_{l+1}^{(t)}-k,\ \Phi_{l-1}^{(t)})=\Phi_l^{(t)}+\max(U_{l-1}^{(t)}-1,\ -U_l^{(t)}),\\
&\max(\Phi_{l+1}^{(t+1)}-k,\ \Phi_{l-1}^{(t+1)})=\Phi_l^{(t+1)}+\max(U_l^{(t)}-1,-U_{l-1}^{(t)}),\\
&\max(\Phi_l^{(t)}+k-\omega,\ \Phi_{l-1}^{(t+1)}+U_l^{(t)}+k-1)=\Phi_{l+1}^{(t+1)},\\
&\max(\Phi_{l+1}^{(t+1)},\ \Phi_{l+1}^{(t)}+U_l^{(t)}-1)=\Phi_l^{(t)}.
\end{split}
\end{equation}
which appeared in the work of Willox et al.~\cite{Wil+}. Here we assume that the potential $U^{(t)}$ is known,
and that $U^{(t)}$ and the solutions $\Phi^{(t)},\Phi^{(t+1)}$ satisfy the conditions (A$U$) and (A$\Phi$) written below, see
Subsection~\ref{ss:sptheory}.

System~\eqref{e:willox} plays the role of the Lax pair for the ultradiscrete KdV equation
\begin{equation}
\label{e:cell-aut}
U_l^{(t+1)}=\min(1-U_l^{(t)},\sum_{k=-\infty}^{l-1} U_k^{(t)}-U_k^{(t+1)}),
\end{equation}
which describes the dynamics of Box\& Ball system of Takahashi and Satsuma~\cite{TS-90}. 
Willox et al.~\cite{Wil+} show how solving~\eqref{e:willox} helps to 
calculate the phase-shifts of solitons after interaction in the case of the real initial
$U$ and, more generally, to solve equation~\eqref{e:cell-aut} at all times.

Very briefly, the relation of~\eqref{e:cell-aut} to the classical discrete and continuous KdV equations
is as follows. It was shown by Tokihiro et al.~\cite{Tok+} that equation~\eqref{e:cell-aut} can be obtained as
ultradiscrete limit (or Maslov dequantization) of the discrete KdV equation
\begin{equation}
\label{e:discr-kdv}
\frac{1}{u_{l+1}^{(t+1)}}-\frac{1}{u_l^{(t)}}=\delta(u_{l+1}^{(t)}-u_l^{(t+1)}),\quad \delta>0
\end{equation}
written by Tsujimoto and Hirota~\cite{TH-98}. This equation turns into the famous Lotka-Volterra 
equation by taking the continuous limit ($\delta\to 0$)~\cite{Tok+}, and the Lotka-Volterra equation
is also known as an integrable discretization of the classical KdV equation. See~\cite{Tok+} and~\cite{Wil+} for 
more explanation.

The intention of this paper is to build a max-plus linear theory of~\eqref{e:willox}. To our point of view, such
theory is lacking in~\cite{Wil+}, where it is claimed that system~\eqref{e:willox} is always solvable, but without
going into the details of the proof. As we will see, the theory of system~\eqref{e:willox} is nontrivial and to the author's
knowledge this kind of problems never appeared in the max-plus literature and could be of its own interest. Namely,
we have two infinite max-plus eigenproblems represented by the first two equations of~\eqref{e:willox} (where the eigenvalue
is necessarily $0$), and two connections between
them represented by the last two equations. Thus we are led to study two related (but different) eigenproblems at the same time,
taking into account some additional constraints.

In this paper we do not address the solvability statement of~\cite{Wil+} in full strength, and rather concentrate on
developing the spectral theory associated with the first two equations of~\eqref{e:willox}. These are two closely related 
infinite max-plus eigenproblems of a special kind. The theory of such problems was developed by Akian, Gaubert and Walsh~\cite{AGW-05},
and it could be applied here. However, we notice that assuming conditions (A$U$) and (A$\Phi$) on $U$ and $\Phi$, the problem 
can be reduced to the more usual finite max-plus spectral theory as described in the monographs~\cite{BCOQ,But:10,HOW:05}. Namely
with each soliton of $U^{(t)}$ we can associate a pair of fundamental eigenvectors, and any solution of the first and of the second
equation of~\eqref{e:willox} appears as their max-plus linear combination, see Proposition~\ref{p:extension} and 
Theorem~\ref{t:fundam}.  Thus we describe the set of all solutions to the first two equations of~\eqref{e:willox}
with natural asymptotic behaviour. 

Next we consider the procedure of undressing the initial potential $U$ by means of a pair
of fundamental eigenvectors. It follows that in a natural special case when the ``interior'' of a soliton in $U$
consists of $1$'s, this soliton disappears after undressing, and the rest of the potential gets shifted by one position 
towards the soliton. Note that in the undressing procedure of~\cite{Wil+}, it is demonstrated that the other solitons may change 
their form. This effect does not happen in our case, since we use the fundamental eigenvectors.

Finally we treat the complete system~\eqref{e:willox}. In the case when $U$ has no massive solitons ($U^{(t)}_i+U_{i+1}^{(t)}<1$
for all $i$) or when $U$ has just one massive soliton, we confirm that~\eqref{e:willox} is solvable by showing
that any pair of fundamental eigenvectors is a solution. In the case of several massive solitons we show that to the contrary,
no pair of fundamental eigenvectors is a solution, so that a combination 
of these fundamental eigenvectors
satisfying~\eqref{e:willox} has to be guessed.

\subsection{Max-plus spectral theory}

Algebra max-plus is developed over the real numbers $\R$ completed by the least element $-\infty$,
with arithmetical operations $a\otimes b:=a+b$ (``multiplication'') and 
$a\oplus b:=\max(a,b)$ (``addition''). The new ``zero'' is $-\infty$ and the new ``unity'' is $0$. This arithmetics is
extended to matrices and vectors in the usual way so that
\begin{equation*}
(A\otimes B)_{ik}=\bigoplus_j A_{ij}\otimes B_{jk},\quad (A\oplus B)_{ij}=A_{ij}\oplus B_{ij},
\end{equation*}
for matrices $A$ and $B$ of appropriate sizes. We will be interested only in the max-plus spectral problem
\begin{equation}
\label{mp-spectral}
A\otimes\Phi=\lambda\otimes\Phi,
\end{equation}
that is, trying to find for a matrix $A\in(\R\cup\{-\infty\})^{n\times n}$ a parameter 
$\lambda\in\R\cup\{-\infty\}$ such that there exists a vector $\Phi$ satisfying~\eqref{mp-spectral} with
not all components equal to $-\infty$.

Max-plus spectral theory uses the following graph-theoretical concepts:\\
{\bf 1. Associated graph} $\digr(A)=(N,E)$ with set of nodes $N=\{1,\ldots,n\}$ and
set of edges $E=\{(i,j): A_{ij}\neq -\infty\}$ weighted by $w(i,j)=A_{ij}$. The concept of weight is
extended to paths $P=(i_0\to i_1\to\ldots\to i_k)$, 
defining the weight of $P$ by 
\begin{equation*}
w(P):=A_{i_0i_1}\cdot\ldots\cdot A_{i_{k-1}i_k}
\end{equation*}
Closed paths $P$ having $i_0=i_k$ are called {\bf cycles}.\\
{\bf 2. Critical graph} $\crit(A)$ comprising all nodes and edges that belong to the
cycles $(i_1,\ldots,i_k)$, on which the maximum in
\begin{equation}
\label{e:mcm}
\lambda(A)=\max\limits_{1\leq k\leq n}\max\limits_{1\leq i_1,\ldots,i_k\leq n} \frac{A_{i_1i_2}+\ldots+A_{i_ki_1}}{k}
\end{equation}
is attained. Such cycles are called {\bf critical}, and so are all nodes and edges of the critical graph.
Being made from cycles, 
the critical graph is completely reducible, i.e., it consists of several isolated strongly connected components.\\
{\bf 3. Saturation graph} $\Sat(\Phi)$ consisting of all nodes and edges satisfying $a_{ij}+\Phi_j=\lambda+\Phi_i$, that is,
attaining maximum on the l.h.s. of~\eqref{mp-spectral}.  

The following theorem explains some properties of the
saturation graph and its relation to the critical graph. It is well-known but we give
a short proof for the reader's convenience.

\begin{theorem}
\label{l:satprops} Let $A\in(\R\cup\{-\infty\})^{n\times n}$.
Suppose that $\Phi$ satisfies $A\otimes\Phi=\Phi$ and has all components finite (i.e.,
not $-\infty$). Then 
\begin{itemize}
\item[1.] Each node has an outgoing
edge in $\Sat(\Phi)$, 
\item[2.] Each cycle in $\digr$ has total weight not exceeding $0$, 
\item[3.] The cycles of $\Sat(\Phi)$ are precisely the cycles of $\crit(A)$
\end{itemize}
\end{theorem}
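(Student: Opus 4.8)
The plan is to exploit the eigen-equation $A\otimes\Phi=\Phi$ entrywise, which reads $\max_j(A_{ij}+\Phi_j)=\Phi_i$ for every node $i$. Part 1 is then immediate: since $\Phi_i$ is finite, the maximum on the left is attained at some index $j$, and for that $j$ we have $A_{ij}+\Phi_j=\Phi_i$, so $A_{ij}\neq-\infty$ and the edge $(i,j)$ lies in $\Sat(\Phi)$. Hence every node has an outgoing edge in the saturation graph.

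For part 2, the same equation gives the pointwise inequality $A_{ij}+\Phi_j\leq\Phi_i$ for every edge $(i,j)\in E$. I would take any cycle $(i_1\to i_2\to\cdots\to i_k\to i_1)$ in $\digr$, write down these inequalities along the cycle, and add them: the $\Phi$-terms telescope (each $\Phi_{i_r}$ appears once on the left and once on the right), leaving $A_{i_1i_2}+A_{i_2i_3}+\cdots+A_{i_ki_1}\leq 0$. So the total weight of any cycle is at most $0$; in particular $\lambda(A)\leq 0$, and since any edge of $\Sat(\Phi)$ satisfies the inequality with equality, one also sees $\lambda(A)=0$ is consistent here (the eigenvalue being $0$).

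For part 3, I would argue both inclusions. First, if $C=(i_1\to\cdots\to i_k\to i_1)$ is a cycle entirely inside $\Sat(\Phi)$, then every edge of it satisfies $A_{i_ri_{r+1}}+\Phi_{i_{r+1}}=\Phi_{i_r}$ with equality; adding around the cycle and telescoping gives total weight exactly $0$, hence mean weight $0=\lambda(A)$, so $C$ is a critical cycle and $C\subseteq\crit(A)$. Conversely, suppose $C$ is a critical cycle, i.e.\ it has mean weight $\lambda(A)=0$ and hence total weight $0$. Along $C$ we again have the inequalities $A_{i_ri_{r+1}}+\Phi_{i_{r+1}}\leq\Phi_{i_r}$; summing them gives total weight $\leq 0$ with equality, which forces each individual inequality to be an equality. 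Therefore every edge of $C$ lies in $\Sat(\Phi)$, so $C$ is a cycle of $\Sat(\Phi)$. Combining the two inclusions, the cycles of $\Sat(\Phi)$ are exactly the cycles of $\crit(A)$.

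The only mildly delicate point — the main obstacle such as it is — is part 3's converse direction: one must be careful that "critical cycle" is taken to mean a cycle realizing the maximum in~\eqref{e:mcm}, so that its mean weight equals $\lambda(A)=0$ and its total weight is exactly $0$; the finiteness of all components of $\Phi$ is what guarantees the telescoping sum is well-defined (no $-\infty$ ambiguities) and that the forcing-of-equality step is valid. With $\lambda(A)=0$ established in part 2, everything else is routine summation along cycles.
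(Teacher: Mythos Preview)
Your proposal is correct and follows essentially the same telescoping argument as the paper's proof: write the inequalities $A_{i_ri_{r+1}}+\Phi_{i_{r+1}}\le\Phi_{i_r}$ along a cycle, sum and cancel, and observe that equality throughout is equivalent to the cycle lying in $\Sat(\Phi)$. One small tightening: part~2 alone gives only $\lambda(A)\le 0$; the equality $\lambda(A)=0$ that you invoke for the converse in part~3 actually comes from combining part~1 with finiteness of the graph (so $\Sat(\Phi)$ contains a cycle, necessarily of weight~$0$)---the paper leaves this implicit as well.
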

\begin{proof}
1.: If $\Phi$ is an eigenvector then for each $i$ there exists $j$ such that $A_{ij}+\Phi_j=\Phi_i$.

2. and 3.: Let $(i_1,\ldots, i_k)$ be a cycle in $\digr(A)$. Then we have
\begin{equation*}
A_{i_1i_2}+\Phi_{i_2}\leq \Phi_{i_1},\ldots,A_{i_ki_1}+\Phi_{i_1}\leq\Phi_{i_k}.
\end{equation*}
Combining these inequalities and cancelling $\Phi$ we get $A_{i_1i_2}+\ldots +A_{i_ki_1}\leq 0$,
which shows 2. Note that $A_{i_1i_2}+\ldots +A_{i_ki_1}<0$ is equivalent to having 
$A_{i_li_{l+1}}+\Phi_{i_{l+1}}<\Phi_{i_l}$ for some $l$, which implies 3.
\end{proof}

Note that Theorem~\ref{l:satprops} generalizes to the case when the matrix $A$ is infinite-dimensional but
each row has a finite number of real entries. This is the case that we will have to work with when analyzing~\eqref{e:willox}.

For $A\in(\R\cup\{-\infty\})^{n\times n}$, 
a formal analogue of $(I-A)^{-1}$ can be defined as
\begin{equation}
\label{kls-0}
A^*=I\oplus A\oplus A^2\oplus\ldots,
\end{equation}
where $I$ is the {\bf max-plus identity matrix}, having $0$ on the diagonal and $-\infty$ otherwise.
$A^*$ converges and can be truncated to $I\oplus A\oplus\ldots A^{n-1}$, if and only if
$\lambda(A)\leq 0$. Note that entries of $A^*$, denoted by $A^*_{ij}$, have a 
{\bf principal path interpretation}: for $i\neq j$ this is the greatest weight of a path connecting
$i$ to $j$.

When $\lambda(A)=0$ it can be shown that any column $A^*_{\cdot i}$ of $A^*$, whose index $i$ belongs to the critical graph
(i.e., {\bf critical column of $A^*$}), is an eigenvector of $A$. Such columns are called the 
{\bf fundamental eigenvectors}. The eigenspace of $A$ can be described more precisely as
follows.

\begin{theorem}
\label{t:max-spectrum}
Let $A\in(\R\cup\{-\infty\})^{n\times n}$.
If $\lambda(A)=0$ and $\Phi$ satisfies $A\otimes\Phi=\Phi$, then
there exist $\alpha_i\in\R\cup\{-\infty\}$ such that
\begin{equation*}
\Phi=\bigoplus_{i\in S} \alpha_i\otimes A_{\cdot i}^*,
\end{equation*}
where $S\in\{1,\ldots,n\}$ is any index set containing precisely one index from
each strongly connected component of $\crit(A)$.
\end{theorem}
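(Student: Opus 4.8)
The plan is to prove the two inclusions separately: first that every max-linear combination $\bigoplus_{i\in S}\alpha_i\otimes A^*_{\cdot i}$ is an eigenvector (the easy direction), and then that every eigenvector $\Phi$ has such a representation (the substantive direction). For the easy direction, one uses that $\lambda(A)=0$ implies $A\otimes A^*=A^*$ (since $A\otimes A^*=A\oplus A^2\oplus\cdots=A^*\ominus I$ componentwise, and the diagonal of $A^*$ is $0$ because all cycle weights are $\le 0$ by part~2 of Theorem~\ref{l:satprops}); hence each column $A^*_{\cdot i}$ with $i$ critical is fixed by $A$, and since $A\otimes(-)$ is max-plus linear, so is any combination of them. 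One should also note the columns are genuinely finite (no $-\infty$) when $i$ is critical and the relevant part of the graph is reachable — or simply allow $\alpha_i=-\infty$ to drop degenerate terms.

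For the hard direction, let $\Phi$ satisfy $A\otimes\Phi=\Phi$. The natural candidate coefficients are $\alpha_i:=\Phi_i$ for $i\in S$. First I would show $\Phi\ge\bigoplus_{i\in S}\Phi_i\otimes A^*_{\cdot i}$: for each $i$, from $A\otimes\Phi=\Phi$ one gets $A^*\otimes\Phi=\Phi$ (iterating and using $\lambda(A)=0$), so $A^*_{ji}+\Phi_i\le\Phi_j$ for all $j$, which is exactly the inequality $\Phi\ge\Phi_i\otimes A^*_{\cdot i}$, and taking the max over $i\in S$ preserves it. The reverse inequality $\Phi_j\le\bigoplus_{i\in S}\Phi_i+A^*_{ji}$ is the crux. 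Here I would invoke Theorem~\ref{l:satprops}: the saturation graph $\Sat(\Phi)$ has an outgoing edge at every node (part~1), so starting from any $j$ and following saturated edges we produce an infinite saturated walk, which must eventually enter a cycle; by part~3 that cycle lies in $\crit(A)$, hence in some strongly connected component $C$ of the critical graph. Along this saturated path from $j$ to some critical node $k\in C$ we have additive equality, so $\Phi_j=w(\text{path})+\Phi_k\le A^*_{jk}+\Phi_k$. Finally, within the component $C$ one moves from $k$ to the chosen representative $i\in S\cap C$ along critical (hence weight-$0$, saturated both ways) paths, giving $\Phi_k=A^*_{ki}+\Phi_i$ and $A^*_{jk}+A^*_{ki}\le A^*_{ji}$ by the principal path interpretation; combining yields $\Phi_j\le A^*_{ji}+\Phi_i\le\bigoplus_{i\in S}\Phi_i+A^*_{ji}$.

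The main obstacle is the finite-termination argument for the saturated walk and the bookkeeping inside a critical component — one needs that a saturated walk of length $>n$ contains a cycle, that cycle is critical (part~3 of Theorem~\ref{l:satprops}), and that all nodes of a given strongly connected component of $\crit(A)$ are mutually reachable by weight-$0$ paths so that $\Phi$ is constant-plus-$A^*$-column on the component and the specific representative in $S$ can be reached. I would also remark that $S$ having \emph{exactly} one index per critical component is what prevents over-counting: different critical components give ``independent'' fundamental eigenvectors, while two columns from the same component are proportional (differ by a scalar), so keeping one per component suffices and is non-redundant. The argument is the standard max-plus spectral theorem proof (see \cite{BCOQ,But:10,HOW:05}); I would keep it brief, citing these monographs and reusing Theorem~\ref{l:satprops}.
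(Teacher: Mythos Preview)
Your proof sketch is correct and follows the standard argument; in fact it is more detailed than what the paper does, since the paper gives no proof of Theorem~\ref{t:max-spectrum} at all and simply refers the reader to the monographs~\cite{BCOQ,But:10,HOW:05}. So your closing remark --- keep it brief and cite those references --- is exactly what the paper itself does.

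One small imprecision worth tightening if you do spell out the easy direction: the identity $A\otimes A^*=A^*$ is not literally true when $\lambda(A)=0$, because for a non-critical index $j$ the diagonal entry $(A\otimes A^*)_{jj}$ is the maximum weight of a nontrivial cycle through $j$, which can be strictly negative, while $A^*_{jj}=0$. What is true (and what you actually need) is that $A\otimes A^*_{\cdot i}=A^*_{\cdot i}$ for each \emph{critical} index $i$, precisely because there is a zero-weight cycle through $i$. Your parenthetical reasoning points in this direction, so just state the column-wise version directly rather than the global matrix identity.
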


That is, each eigenvector of $A$ is a {\bf max-linear combination} of the fundamental
eigenvectors.

Theorem~\ref{t:max-spectrum} can be found in several monographs on max-plus algebra~\cite{BCOQ, But:10, HOW:05}. 
The max-plus spectral theory (both finite- and infinite-dimensional) has applications ranging from railway scheduling~\cite{HOW:05} to
Frenkel-Kontorova model in solid state physics~\cite{CG-86},\cite{WCF-05}, and the crop rotation problem 
in the agriculture~\cite{Bac-03}.

\section{Simplified Lax pair}

\subsection{Solitons and critical graphs}

For most of this paper we will consider a simplified version of the Lax system for udKdV
consisting of the first two equations of~\eqref{e:willox}, which we rewrite as 

\begin{equation}
\label{e:lin-syst}
\begin{split}
\max[\Phi_{i+1}^{(t)}+\gamma_i-k,\Phi_{i-1}^{(t)}+\gamma_i] &=\Phi_i^{(t)},\\
\max[\Phi_{i+1}^{(t+1)}+\delta_i-k,\Phi_{i-1}^{(t+1)}+\delta_i] &=\Phi_i^{(t+1)},\\
\text{where} \gamma_i=\min(U_i^{(t)},1-U_{i-1}^{(t)}),\quad \delta_i &=\min(U_{i-1}^{(t)},1-U_i^{(t)}).
\end{split}
\end{equation}

Further we will fix $t$ and denote $u_i:=U_i^{(t)}$, $\Phi^{(1)}:=\Phi^{(t)}$ and $\Phi^{(2)}:=\Phi^{(t+1)}$.

We distinguish between two cases:\\
(C1) When $v_{\sup}=\sup_i (u_i+u_{i+1})\leq 1$.\\ 
(C2) When $v_{\sup}=\sup_i (u_i+u_{i+1})\geq 1$. 

Note that the borderline case $\sup_i u_i+u_{i+1}=1$ can be regarded in both ways, not leading to any contradiction.

We observe that~\eqref{e:lin-syst} is a combination of two max-plus
eigenproblems $A(\gamma)\otimes\Phi^{(1)}=\Phi^{(1)}$ (first equation) and 
$A(\delta)\otimes\Phi^{(2)}=\Phi^{(2)}$ (second equation), where the coefficients
of $A(\gamma)$ and $A(\delta)$ can be written as follows.

\begin{lemma}
\label{coeffs-s1}
In the case (C1),
\begin{equation}
\label{e:coeffs-s1}
\begin{split}
& A(\gamma)_{i+1,i}=u_{i+1},\quad A(\gamma)_{i,i+1}=u_i-k,\\
& A(\delta)_{i+1,i}=u_i,\quad A(\delta)_{i,i+1}=u_{i-1}-k.
\end{split}
\end{equation}
\end{lemma}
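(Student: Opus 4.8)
The plan is to read off the matrix entries directly from the first two equations of \eqref{e:lin-syst}. The key observation is purely structural: in each of the two eigenproblems $A(\gamma)\otimes\Phi^{(1)}=\Phi^{(1)}$ and $A(\delta)\otimes\Phi^{(2)}=\Phi^{(2)}$, the $i$-th equation involves only $\Phi_{i-1}$ and $\Phi_{i+1}$ on the left-hand side, so the matrices $A(\gamma)$ and $A(\delta)$ are tridiagonal with $-\infty$ on the diagonal; their only finite entries are $A(\gamma)_{i,i-1}$, $A(\gamma)_{i,i+1}$, and similarly for $\delta$. Comparing the first line of \eqref{e:lin-syst}, namely $\max[\Phi_{i+1}^{(t)}+\gamma_i-k,\ \Phi_{i-1}^{(t)}+\gamma_i]=\Phi_i^{(t)}$, with the $i$-th row of $A(\gamma)\otimes\Phi^{(1)}=\bigoplus_j A(\gamma)_{ij}\otimes\Phi^{(1)}_j$, I would match coefficients to get $A(\gamma)_{i,i+1}=\gamma_i-k$ and $A(\gamma)_{i,i-1}=\gamma_i$, and likewise $A(\delta)_{i,i+1}=\delta_i-k$, $A(\delta)_{i,i-1}=\delta_i$.

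Next I would substitute the definitions $\gamma_i=\min(u_i,1-u_{i-1})$ and $\delta_i=\min(u_{i-1},1-u_i)$ and then invoke the hypothesis of case (C1), that $u_i+u_{i+1}\le 1$ for all $i$, equivalently $u_i\le 1-u_{i+1}$ and (shifting the index) $u_{i-1}\le 1-u_i$. This inequality resolves each minimum: $\gamma_i=\min(u_i,1-u_{i-1})$; since $u_{i-1}\le 1-u_i$ gives $u_i\le 1-u_{i-1}$, the minimum is $u_i$, so $\gamma_i=u_i$. Symmetrically, $\delta_i=\min(u_{i-1},1-u_i)$; since $u_{i-1}\le 1-u_i$ directly, the minimum is $u_{i-1}$, so $\delta_i=u_{i-1}$. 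Plugging back in gives $A(\gamma)_{i,i-1}=u_i$, $A(\gamma)_{i,i+1}=u_i-k$, $A(\delta)_{i,i-1}=u_{i-1}$, $A(\delta)_{i,i+1}=u_{i-1}-k$.

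Finally I would rewrite these in the indexing used in the statement of Lemma~\ref{coeffs-s1}: the entry $A(\gamma)_{i,i-1}=u_i$ becomes, after the substitution $i\mapsto i+1$ in the subscript positions, $A(\gamma)_{i+1,i}=u_{i+1}$, while $A(\gamma)_{i,i+1}=u_i-k$ is already in the desired form; similarly $A(\delta)_{i,i-1}=u_{i-1}$ becomes $A(\delta)_{i+1,i}=u_i$ and $A(\delta)_{i,i+1}=u_{i-1}-k$ is as stated. This yields \eqref{e:coeffs-s1} exactly.

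The proof is essentially a bookkeeping exercise, so there is no serious obstacle; the only point requiring a little care is the reindexing between the ``row-$i$ form'' of the equations in \eqref{e:lin-syst} and the ``$(i+1,i)$-entry form'' in \eqref{e:coeffs-s1}, and making sure the case (C1) inequality is applied with the correct index shift to resolve both minima consistently. One should also note in passing that although $A(\gamma)$ and $A(\delta)$ are formally infinite-dimensional (indexed by $i\in\mathbb{Z}$), each row has exactly two finite entries, so this is precisely the setting to which, as remarked after Theorem~\ref{l:satprops}, the finite-dimensional spectral machinery extends.
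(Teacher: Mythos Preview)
Your argument is correct and is exactly the straightforward computation the paper has in mind; indeed the paper does not spell out a proof at all, stating only that ``the proofs are straightforward.'' Your identification $A(\gamma)_{i,i-1}=\gamma_i$, $A(\gamma)_{i,i+1}=\gamma_i-k$ (and the analogous $\delta$-entries), followed by the use of $u_{i-1}+u_i\le 1$ to resolve the minima and a reindexing, is precisely the intended route.
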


\begin{lemma}
\label{coeffs-s2}
In the case of (C2),
\begin{equation}
\label{e:coeffs-s2}
\begin{split}
&A(\gamma)_{i+1,i}=
\begin{cases}
u_{i+1}, &\text{if $u_i+u_{i+1}<1$},\\
1-u_i, &\text{if $u_i+u_{i+1}\geq 1$}
\end{cases},\\
&A(\gamma)_{i,i+1}=
\begin{cases}
u_i-k, &\text{if $u_i+u_{i-1}<1$},\\
1-u_{i-1}-k, &\text{if $u_i+u_{i-1}\geq 1$}
\end{cases},\\
&A(\delta)_{i+1,i}=
\begin{cases}
u_i, &\text{if $u_i+u_{i+1}<1$},\\
1-u_{i+1}, &\text{if $u_i+u_{i+1}\geq 1$}
\end{cases},\\ 
&A(\delta)_{i,i+1}=
\begin{cases}
u_{i-1}-k, &\text{if $u_i+u_{i-1}<1$},\\
1-u_i-k, &\text{if $u_i+u_{i-1}\geq 1$}
\end{cases}
\end{split}
\end{equation}
\end{lemma}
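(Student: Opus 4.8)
The plan is to read off the finite entries of $A(\gamma)$ and $A(\delta)$ directly from the scalar equations \eqref{e:lin-syst}, and then to evaluate the minima defining $\gamma_i$ and $\delta_i$ by a case distinction on the sign of $u_i+u_{i\pm1}-1$. For the first part I would expand the $i$-th coordinate of the max-plus product, $(A(\gamma)\otimes\Phi^{(1)})_i=\max_j\bigl(A(\gamma)_{ij}+\Phi_j^{(1)}\bigr)$, and compare it term by term with the first equation of \eqref{e:lin-syst} cast as $A(\gamma)\otimes\Phi^{(1)}=\Phi^{(1)}$. This forces row $i$ of $A(\gamma)$ to have exactly the two finite entries $A(\gamma)_{i,i+1}=\gamma_i-k$ and $A(\gamma)_{i,i-1}=\gamma_i$, all remaining entries being $-\infty$; the same reasoning applied to the second equation gives $A(\delta)_{i,i+1}=\delta_i-k$ and $A(\delta)_{i,i-1}=\delta_i$. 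Re-indexing the subdiagonal entry, i.e.\ replacing $i$ by $i+1$, yields $A(\gamma)_{i+1,i}=\gamma_{i+1}$ and $A(\delta)_{i+1,i}=\delta_{i+1}$.

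For the second part I would substitute $\gamma_{i+1}=\min(u_{i+1},1-u_i)$, $\gamma_i=\min(u_i,1-u_{i-1})$, $\delta_{i+1}=\min(u_i,1-u_{i+1})$, $\delta_i=\min(u_{i-1},1-u_i)$, and split according to which argument attains each minimum. Since $u_{i+1}\le 1-u_i$, equivalently $u_i\le 1-u_{i+1}$, holds iff $u_i+u_{i+1}\le1$, the subdiagonal entry equals $u_{i+1}$ for $A(\gamma)$ and $u_i$ for $A(\delta)$ when $u_i+u_{i+1}<1$, and equals $1-u_i$, resp.\ $1-u_{i+1}$, when $u_i+u_{i+1}\ge1$. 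Since $u_i\le 1-u_{i-1}$, equivalently $u_{i-1}\le 1-u_i$, holds iff $u_i+u_{i-1}\le1$, the superdiagonal entry equals $u_i-k$ for $A(\gamma)$ and $u_{i-1}-k$ for $A(\delta)$ when $u_i+u_{i-1}<1$, and equals $1-u_{i-1}-k$, resp.\ $1-u_i-k$, when $u_i+u_{i-1}\ge1$. This is precisely \eqref{e:coeffs-s2}, and the two branches agree on the borderline $u_i+u_{i\pm1}=1$.

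I expect no genuine obstacle: the whole content is the identification of the coefficients of the two eigenproblems with the coefficients of the scalar recurrences \eqref{e:lin-syst}, followed by the elementary evaluation of a minimum of two reals (which are real and bounded under (A$U$), so the minima are well defined and finite). The one place where an index or sign slip is easy to make is the shift in the first part, between the scalar equation, which is indexed by its row and written via $\gamma_i,\delta_i$, and the statement's subdiagonal entries $A(\gamma)_{i+1,i}$ and $A(\delta)_{i+1,i}$, which are governed by $\gamma_{i+1},\delta_{i+1}$; I would therefore write that step out explicitly. Finally, repeating the computation under case (C1)'s hypothesis $\sup_i(u_i+u_{i+1})\le1$ collapses each case to its first branch, which recovers Lemma \ref{coeffs-s1} and confirms the remark that the borderline case may be assigned to either (C1) or (C2).
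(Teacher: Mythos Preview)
Your proposal is correct and matches the paper's approach: the paper states only that ``the proofs are straightforward,'' and what you have written is precisely that straightforward verification---identifying the two finite entries in each row from \eqref{e:lin-syst} and then evaluating $\min(u_{i+1},1-u_i)$, etc., by the sign of $u_i+u_{i\pm1}-1$. Your explicit attention to the index shift from $A(\gamma)_{i,i-1}=\gamma_i$ to $A(\gamma)_{i+1,i}=\gamma_{i+1}$ is well placed, as that is indeed the only spot where a slip is likely.
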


The proofs are straightforward. We proceed with the following crucial definition.

\begin{figure}
\centering

\begin{tikzpicture}[shorten >=1pt,->]
  \tikzstyle{vertex}=[circle,fill=black!15,minimum size=17pt,inner sep=1pt]
\foreach \name/\x in {1/3, 2/5, 3/7, 4/9}
    \node[vertex] (\name) at (\x,0) {$\name$};
\draw[xshift=1.5cm] node{$\Digr(\gamma):$};

\draw (1) .. controls +(50:1cm) and +(130:1cm) .. (2);
\draw (2) .. controls +(50:1cm) and +(130:1cm) .. (3);
\draw (3) .. controls +(50:1cm) and +(130:1cm) .. (4);

\draw[xshift=4cm,yshift=0.85cm] node{$u_{l-1}-k$};
\draw[xshift=6cm,yshift=0.85cm] node{$u_l-k$};
\draw[xshift=8cm,yshift=0.85cm] node{$u_{l+1}-k$};

\draw (4) .. controls +(-130:1cm) and +(-50:1cm) .. (3);
\draw (3) .. controls +(-130:1cm) and +(-50:1cm) .. (2);
\draw (2) .. controls +(-130:1cm) and +(-50:1cm) .. (1);

\draw[xshift=4cm,yshift=-0.85cm] node{$u_l$};
\draw[xshift=6cm,yshift=-0.85cm] node{$u_{l+1}$};
\draw[xshift=8cm,yshift=-0.85cm] node{$u_{l+2}$};
\end{tikzpicture}

\begin{tikzpicture}[shorten >=1pt,->]
  \tikzstyle{vertex}=[circle,fill=black!15,minimum size=17pt,inner sep=1pt]
\foreach \name/\x in { 1/3, 2/5, 3/7, 4/9}
    \node[vertex] (\name) at (\x,0) {$\name$};
\draw[xshift=1.5cm] node{$\Digr(\delta):$};

\draw (1) .. controls +(50:1cm) and +(130:1cm) .. (2);
\draw (2) .. controls +(50:1cm) and +(130:1cm) .. (3);
\draw (3) .. controls +(50:1cm) and +(130:1cm) .. (4);

\draw[xshift=4cm,yshift=0.85cm] node{$u_{l-2}-k$};
\draw[xshift=6cm,yshift=0.85cm] node{$u_{l-1} -k$};
\draw[xshift=8cm,yshift=0.85cm] node{$u_{l}-k$};

\draw (4) .. controls +(-130:1cm) and +(-50:1cm) .. (3);
\draw (3) .. controls +(-130:1cm) and +(-50:1cm) .. (2);
\draw (2) .. controls +(-130:1cm) and +(-50:1cm) .. (1);

\draw[xshift=4cm,yshift=-0.85cm] node{$u_{l-1}$};
\draw[xshift=6cm,yshift=-0.85cm] node{$u_{l}$};
\draw[xshift=8cm,yshift=-0.85cm] node{$u_{l+1}$};

\end{tikzpicture}

\begin{tikzpicture}[shorten >=1pt,->]

\draw[xshift=1.5cm] node{$U$:};

\draw[xshift=3cm,yshift=0cm] node{$u_{l-1}$};
\draw[xshift=5cm,yshift=0cm] node{$u_{l}$};
\draw[xshift=7cm,yshift=0cm] node{$u_{l+1}$};
\draw[xshift=9cm,yshift=0cm] node{$u_{l+2}$};

\end{tikzpicture}
\caption{\label{f:C1} Case (C1): a fragment of the associated digraphs}
\end{figure}
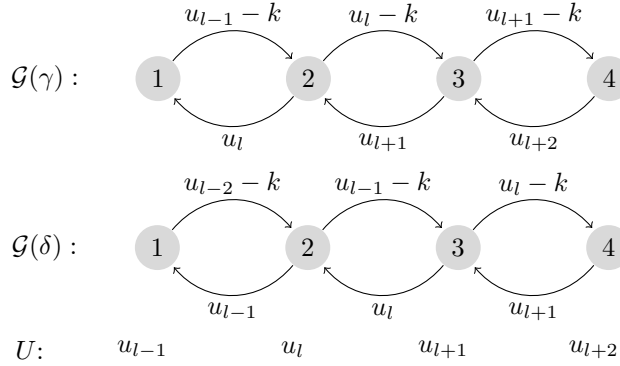

\begin{definition}[Solitons]
\label{def:sol}
\begin{enumerate}
\item[1.] In the case of (C1), soliton is a sequence of indices
$(l,l+1,\ldots,l+s)$ such that $u_l+u_{l+1}=\ldots=u_{l+s-1}+u_{l+s}=v_{\sup}$, while
$u_{l-1}+u_l< v_{\sup}$ and $u_{l+s}+u_{l+s+1}< v_{\sup}$. 
\item[2.] In the case of (C2), soliton is a sequence of indices $(l,l+1,\ldots,l+s)$ such that $u_l+u_{l+1}\geq 1,\ldots,
u_{l+s-1}+u_{l+s}\geq 1$, while $u_{l-1}+u_l< 1$ and $u_{l+s}+u_{l+s+1}<1$.
\end{enumerate}
\end{definition}

If in the equation of cellular automaton~\eqref{e:cell-aut}
we assume that $U_i^{(t)}=U_i^{(t+1)}=0$ for all $i<-N$ where $N$ is sufficiently
large (see condition $(AU)$ below), then its dynamics can be 
computed explicitly. In the case (C1) the whole vector $U^{(t)}$ gets shifted
by one position to the right. In the case (C2) the behaviour is more complex. 
Like in the classical theory of KdV, the solitons (as defined above) 
move with different speed depending on their mass (not defined here).
After interaction they emerge again with a phase-shift, as described by Willox et al.~\cite{Wil+}.
So it can be argued that Definition~\ref{def:sol} has a ``physical sense'' only in the
case (C2). However, as we show below, the theory of eigenproblems~\eqref{e:lin-syst} is similar in both cases.

Consider, with Lemmas~\ref{coeffs-s1} and~\ref{coeffs-s2} in mind, the associated weighted digraphs
$\Digr(\gamma)$ and $\Digr(\delta)$ of matrices $A(\gamma)$ and $A(\delta)$.
We are going to study the critical cycles, i.e., the two-cycles with the greatest total weight,
and the critical graph, consisting of all nodes and edges on the critical cycles. We relate
the strongly connected components of critical graphs to solitons, and we give a formula for 
the greatest total weight when the solitons exist.

The case (C1) is displayed on Figure~\ref{f:C1}. Clearly, solitons correspond to the strongly connected components of the
critical graph (if it is non-empty), consisting of the two-cycles with the greatest total weight
$\max_i (u_i+u_{i+1}-k)$.

In the case of (C2), we give only fragments of these digraphs corresponding to the tail (i.e., the left end)
and the head (i.e., the right end) of any soliton. The reader may assume $k=1$, which will follow from 
Proposition~\ref{p:sat-prop}, under some assumptions on $U$ and $\Phi$. See Figures~\ref{f:tail} and~\ref{f:head}.

\begin{figure}

\begin{tikzpicture}[shorten >=1pt,->]
  \tikzstyle{vertex}=[circle,fill=black!15,minimum size=17pt,inner sep=1pt]
\tikzstyle{vertex2}=[circle,fill=black!40,minimum size=17pt,inner sep=1pt]
\foreach \name/\x in {1/3, 4/9}
    \node[vertex] (\name) at (\x,0) {$\name$};
\foreach \name/\x in {2/5, 3/7}
    \node[vertex2] (\name) at (\x,0) {$\name$};

\draw[xshift=1.5cm] node{$\Digr(\gamma):$};

\draw (1) .. controls +(50:1cm) and +(130:1cm) .. (2);
\draw (2) .. controls +(50:1cm) and +(130:1cm) .. (3);
\draw (2) .. controls +(50:1cm) and +(130:1cm) .. (3);
\draw (3) .. controls +(50:1cm) and +(130:1cm) .. (4);

\draw[xshift=4cm,yshift=0.85cm] node{$u_{l-1}-k$};
\draw[xshift=6cm,yshift=0.85cm] node{$u_l-k$};
\draw[xshift=8cm,yshift=0.85cm] node{$1-u_l-k$};


\draw (4) .. controls +(-130:1cm) and +(-50:1cm) .. (3);
\draw (3) .. controls +(-130:1cm) and +(-50:1cm) .. (2);
\draw (3) .. controls +(-130:1cm) and +(-50:1cm) .. (2);
\draw (2) .. controls +(-130:1cm) and +(-50:1cm) .. (1);

\draw[xshift=4cm,yshift=-0.85cm] node{$u_l$};
\draw[xshift=6cm,yshift=-0.85cm] node{$1-u_l$};
\draw[xshift=8cm,yshift=-0.85cm] node{$1-u_{l+1}$};
\end{tikzpicture}

\begin{tikzpicture}[shorten >=1pt,->]
  \tikzstyle{vertex}=[circle,fill=black!15,minimum size=17pt,inner sep=1pt]
\foreach \name/\x in {1/3, 2/5, 3/7, 4/9}
    \node[vertex] (\name) at (\x,0) {$\name$};
\draw[xshift=1.5cm] node{$\Digr(\delta):$};

\draw (1) .. controls +(50:1cm) and +(130:1cm) .. (2);
\draw (2) .. controls +(50:1cm) and +(130:1cm) .. (3);
\draw (3) .. controls +(50:1cm) and +(130:1cm) .. (4);

\draw[xshift=4cm,yshift=0.85cm] node{$u_{l-2}-k$};
\draw[xshift=6cm,yshift=0.85cm] node{$u_{l-1}-k$};
\draw[xshift=8cm,yshift=0.85cm] node{$1-u_{l+1}-k$};

\draw (4) .. controls +(-130:1cm) and +(-50:1cm) .. (3);
\draw (3) .. controls +(-130:1cm) and +(-50:1cm) .. (2);
\draw (2) .. controls +(-130:1cm) and +(-50:1cm) .. (1);

\draw[xshift=4cm,yshift=-0.85cm] node{$u_{l-1}$};
\draw[xshift=6cm,yshift=-0.85cm] node{$1-u_{l+1}$};
\draw[xshift=8cm,yshift=-0.85cm] node{$1-u_{l+2}$};

\end{tikzpicture}

\begin{tikzpicture}[shorten >=1pt,->]

\draw[xshift=1.5cm] node{$U$:};

\draw[xshift=3cm,yshift=0cm] node{$u_{l-1}$};
\draw[xshift=5cm,yshift=0cm] node{$u_l$};
\draw[xshift=7cm,yshift=0cm] node{$u_{l+1}$};
\draw[xshift=9cm,yshift=0cm] node{$u_{l+2}$};
\end{tikzpicture}

\caption{\label{f:tail} Case (C2): tail of a soliton. Cycle (2,3) of $\Digr(\gamma)$ is critical.}
\end{figure}
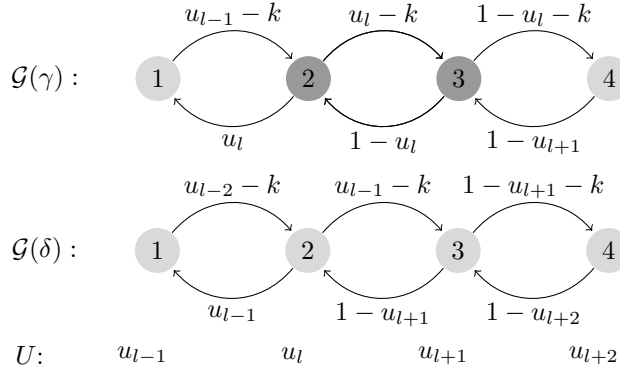

\begin{figure}

\begin{tikzpicture}[shorten >=1pt,->]
  \tikzstyle{vertex}=[circle,fill=black!15,minimum size=17pt,inner sep=1pt]

\foreach \name/\x in {1/3, 2/5.4, 3/7.8,  4/10.2}
    \node[vertex] (\name) at (\x,0) {$\name$};

\draw[xshift=1.5cm] node{$\Digr(\gamma):$};

\draw (1) .. controls +(50:1cm) and +(130:1cm) .. (2);
\draw (2) .. controls +(50:1cm) and +(130:1cm) .. (3);
\draw (3) .. controls +(50:1cm) and +(130:1cm) .. (4);

\draw[xshift=4.2cm,yshift=0.85cm] node{$1-u_{l+s-3}-k$};
\draw[xshift=6.6cm,yshift=0.85cm] node{$1-u_{l+s-2}-k$};
\draw[xshift=9cm,yshift=0.85cm] node{$1-u_{l+s-1}-k$};

\draw (4) .. controls +(-130:1cm) and +(-50:1cm) .. (3);
\draw (3) .. controls +(-130:1cm) and +(-50:1cm) .. (2);
\draw (2) .. controls +(-130:1cm) and +(-50:1cm) .. (1);

\draw[xshift=4.2cm,yshift=-0.85cm] node{$1-u_{l+s-2}$};
\draw[xshift=6.6cm,yshift=-0.85cm] node{$1-u_{l+s-1}$};
\draw[xshift=9cm,yshift=-0.85cm] node{$u_{l+s+1}$};
\end{tikzpicture}

\begin{tikzpicture}[shorten >=1pt,->]
  \tikzstyle{vertex}=[circle,fill=black!15,minimum size=17pt,inner sep=1pt]
\tikzstyle{vertex2}=[circle,fill=black!40,minimum size=17pt,inner sep=1pt]
\foreach \name/\x in {1/3, 2/5.4}
    \node[vertex] (\name) at (\x,0) {$\name$};
\foreach \name/\x in {3/7.8, 4/10.2}
    \node[vertex2] (\name) at (\x,0) {$\name$};

\draw[xshift=1.5cm] node{$\Digr(\delta):$};

\draw (1) .. controls +(50:1cm) and +(130:1cm) .. (2);
\draw (2) .. controls +(50:1cm) and +(130:1cm) .. (3);
\draw (3) .. controls +(50:1cm) and +(130:1cm) .. (4);
\draw (3) .. controls +(50:1cm) and +(130:1cm) .. (4);

\draw[xshift=4.2cm,yshift=0.85cm] node{$1-u_{l+s-2}-k$};
\draw[xshift=6.6cm,yshift=0.85cm] node{$1-u_{l+s-1} -k$};
\draw[xshift=9cm,yshift=0.85cm] node{$1-u_{l+s}-k$};

\draw (4) .. controls +(-130:1cm) and +(-50:1cm) .. (3);
\draw (4) .. controls +(-130:1cm) and +(-50:1cm) .. (3);
\draw (3) .. controls +(-130:1cm) and +(-50:1cm) .. (2);
\draw (2) .. controls +(-130:1cm) and +(-50:1cm) .. (1);

\draw[xshift=4.2cm,yshift=-0.85cm] node{$1-u_{l+s-1}$};
\draw[xshift=6.6cm,yshift=-0.85cm] node{$1-u_{l+s}$};
\draw[xshift=9cm,yshift=-0.85cm] node{$u_{l+s}$};

\end{tikzpicture}

\begin{tikzpicture}[shorten >=1pt,->]

\draw[xshift=1.5cm] node{$U$:};

\draw[xshift=3cm,yshift=0cm] node{$u_{l+s-2}$};
\draw[xshift=5.4cm,yshift=0cm] node{$u_{l+s-1}$};
\draw[xshift=7.8cm,yshift=0cm] node{$u_{l+s}$};
\draw[xshift=10.2cm,yshift=0cm] node{$u_{l+s+1}$};

\end{tikzpicture}

\caption{\label{f:head} Case (C2): head of a soliton. Cycle (3,4) of $\Digr(\delta)$
is critical}

\end{figure}
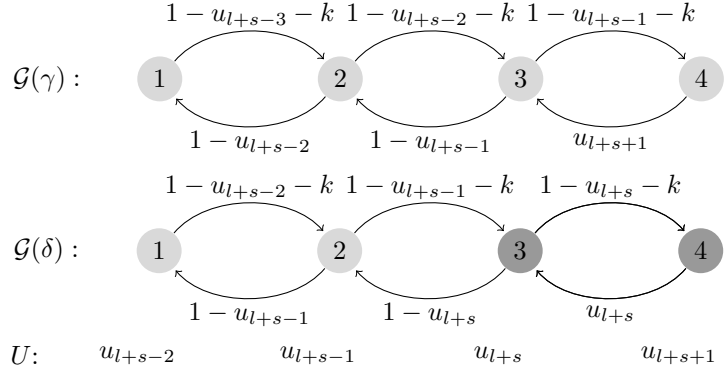

All simple cycles of these graphs have length two. If we are in the case (C2), then
the critical cycles of $\Digr(\gamma)$, i.e., the ones with the greatest sum of the weight of edges
equal to $1-k$,  
are in the tails of all massive solitons, between the nodes marked as 2 and 3. Likewise for $\Digr(\delta)$ the critical cycles (of the
same total weight $1-k$)
are in the heads of all massive solitons, between the nodes marked as 3 and 4. Other cycles in the
soliton can be also critical, if and only if $u_i+u_{i+1}=1$ for the corresponding $i$. We obtain that the greatest total weight of two-cycles is $1-k$. What we observed so far can be summarized as below.

\begin{theorem}
\label{t:first}
If the solitons exist, then the greatest total weight of two-cycles is
\begin{equation}
\label{kminmax}
\min(\max_i(u_{i-1}+u_i),1)-k.
\end{equation}
In this case the critical graphs of both $A(\gamma)$ and $A(\delta)$ are non-empty, and their strongly connected
components are in one-to-one correspondence with the solitons. 
\end{theorem}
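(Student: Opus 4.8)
The plan is to prove the statement separately in the two cases (C1) and (C2). Since all simple cycles of $\Digr(\gamma)$ and $\Digr(\delta)$ have length two, maximising the cycle mean in~\eqref{e:mcm} amounts to maximising the total weight $w_i:=A_{i,i+1}+A_{i+1,i}$ of the two-cycle through the nodes $i,i+1$ over all $i$, and the critical graph consists precisely of those two-cycles that attain this maximum. So the whole argument reduces to computing $w_i$ from Lemmas~\ref{coeffs-s1} and~\ref{coeffs-s2}, finding its maximum, and locating the maximisers.

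In case (C1), Lemma~\ref{coeffs-s1} gives $w_i=u_i+u_{i+1}-k$ for $\Digr(\gamma)$ and $w_i=u_{i-1}+u_i-k$ for $\Digr(\delta)$. The supremum of $w_i$ is $v_{\sup}-k$, which equals $\min(\max_i(u_{i-1}+u_i),1)-k$ since $v_{\sup}\le 1$; when solitons exist this supremum is attained. For $\Digr(\gamma)$ the maximisers are the indices $i$ with $u_i+u_{i+1}=v_{\sup}$, i.e. the bonds of solitons (Definition~\ref{def:sol}); consecutive such cycles share a node, so the critical cycles inside a soliton $(l,\dots,l+s)$ form a single strongly connected component on $\{l,\dots,l+s\}$, while cycles belonging to distinct solitons are node-disjoint because solitons are separated by bonds with $u_i+u_{i+1}<v_{\sup}$. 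Hence the components of $\crit(A(\gamma))$ are in bijection with the solitons; for $\Digr(\delta)$ the same holds after the obvious shift of the index by one. Non-emptiness is immediate, each soliton producing at least one critical cycle.

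In case (C2), Lemma~\ref{coeffs-s2} gives for $\Digr(\gamma)$ the total weight $w_i=\min(u_i,1-u_{i-1})+\min(u_{i+1},1-u_i)-k$, and the elementary estimate $\min(u_i,1-u_{i-1})+\min(u_{i+1},1-u_i)\le u_i+(1-u_i)=1$ shows $w_i\le 1-k$, with equality exactly when $u_{i-1}+u_i\le 1$ and $u_i+u_{i+1}\ge 1$. Since $v_{\sup}\ge 1$ the maximum is $1-k=\min(\max_i(u_{i-1}+u_i),1)-k$, attained at the tails of solitons (and, in the borderline situation where an interior soliton bond equals $1$, at certain interior indices of a soliton, where condition (A$U$) is invoked). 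An identical computation for $\Digr(\delta)$ gives $w_i=\min(u_{i-1},1-u_i)+\min(u_i,1-u_{i+1})-k\le 1-k$, with equality exactly at the heads of solitons. So every soliton contributes exactly one critical two-cycle to $\crit(A(\gamma))$, namely its tail cycle, and exactly one to $\crit(A(\delta))$, namely its head cycle; both critical graphs are therefore non-empty, and cycles coming from different solitons are node-disjoint because consecutive solitons are separated by positions with $u_i+u_{i+1}<1$, where no critical cycle can sit. Combining the two cases yields formula~\eqref{kminmax} and the asserted one-to-one correspondence.

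The weight bookkeeping is routine and is already sketched in the figures and the paragraphs preceding the statement. The point I expect to require the most care is the (C2) equality analysis together with the claim that the critical cycles inside a single soliton constitute \emph{one} strongly connected component: interior soliton bonds equal to $1$ produce additional critical cycles, and one must use condition (A$U$) (together with the separation of distinct solitons by subcritical bonds) to guarantee that the components of the critical graph are genuinely in bijection with the solitons rather than merely contained in them.
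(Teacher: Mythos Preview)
Your approach is the same as the paper's: Theorem~\ref{t:first} is presented there as a summary of the computations in Lemmas~\ref{coeffs-s1}--\ref{coeffs-s2} and Figures~\ref{f:C1}--\ref{f:head}, and you reproduce those computations correctly for the formula~\eqref{kminmax} and for the non-emptiness of the critical graphs.

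The gap is precisely where you flag it, but your proposed remedy is a red herring. In case (C2) your equality analysis is right: the two-cycle $(i,i+1)$ of $\Digr(\gamma)$ has weight $1-k$ exactly when $u_{i-1}+u_i\le 1$ and $u_i+u_{i+1}\ge 1$, so inside a soliton $(l,\dots,l+s)$ an interior cycle $(i,i+1)$ with $i>l$ is critical iff $u_{i-1}+u_i=1$. Nothing forces these equalities to occur contiguously. Take a soliton on $\{1,2,3,4\}$ with $u_0=u_5=0$, $u_1+u_2>1$, $u_2+u_3=1$, $u_3+u_4>1$ (for instance $u_1=0.8,\ u_2=0.7,\ u_3=0.3,\ u_4=0.8$): then $(1,2)$ and $(3,4)$ are critical in $\Digr(\gamma)$ while $(2,3)$ is not, so a single soliton yields two disjoint strongly connected components. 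Condition~(A$U$) concerns only the vanishing of $u_i$ for large $|i|$ and says nothing about interior soliton bonds, so it cannot rescue the bijection.

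The paper's own justification is equally informal at this point. What is actually used downstream (Propositions~\ref{phirel-s1}--\ref{phirel-s2} and the undressing) is only that the \emph{tail} cycle $(l,l+1)$ of $\Digr(\gamma)$ and the \emph{head} cycle $(l+s,l+s+1)$ of $\Digr(\delta)$ are always critical, giving each soliton a distinguished critical component; that much you have established. The literal one-to-one correspondence holds only under an extra genericity hypothesis (no interior soliton bond equals $1$, or more precisely the bonds equal to $1$ form an initial segment of the soliton for $\Digr(\gamma)$ and a terminal segment for $\Digr(\delta)$). You should either impose such a hypothesis explicitly or weaken the conclusion to ``each soliton determines a distinguished critical component and every critical component lies inside some soliton''.
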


\subsection{Reduction to the max-plus spectral theory}

\label{ss:sptheory}

Further we will assume the following requirements on the potential $U$
and on the solution $\Phi$.

(A$U$) There exists $N$ such that $u_i=0$ for all $i\geq N$ and $i\leq -N$.

(A$\Phi$) There exist arbitrarily large $N'$ and $N''$ such that $\Phi_{N'}=\Phi_{N'+1}$ 
and $\Phi_{-N''-1}=\Phi_{-N''}-k$.

It will be clear that $(A\Phi)$ is equivalent to the {\bf bound state} condition of~\cite{Wil+}:
that $\Phi_i$ tends to $-\infty$ when $i\to -\infty$, and that $\Phi_i$ is constant
for $i\geq N'$ for some $N'$.

In what follows we treat $A(\gamma)$ and $A(\delta)$ at the same time, denoting them
by $A$. The associated digraph will be denoted by $\digr$. We will need the 
following immediate observation (following, for instance, from Lemmas~\ref{coeffs-s1} and~\ref{coeffs-s2}).

\begin{lemma}
\label{l:tails}
We have $A_{i,i+1}=-k$ and $A_{i+1,i}=0$ for $i\geq N$ and $i\leq -N-1$. 
All weights of cycles $(i,i+1)$ equal $-k$.  These
cycles have weight $0$ (all of them) if and only if $k=0$.
\end{lemma}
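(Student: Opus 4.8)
The statement is an immediate consequence of condition (A$U$) together with the coefficient formulas of Lemmas~\ref{coeffs-s1} and~\ref{coeffs-s2}, so the plan is essentially a substitution. First I would record that for tail indices, i.e. $i\geq N$ or $i\leq -N-1$ (with $N$ as in (A$U$), possibly enlarged by one so that $u_{i-1}=0$ as well when $i\geq N$), condition (A$U$) forces $u_j=0$ for all indices $j$ occurring in the relevant entries of $A(\gamma)$ and $A(\delta)$; in particular the nearest-neighbour sums $u_i+u_{i+1}$ and $u_{i-1}+u_i$ all vanish and hence are $<1$. Consequently the distinction between cases (C1) and (C2) is irrelevant in the tails: the branches of~\eqref{e:coeffs-s2} selected there are exactly the ones reproducing~\eqref{e:coeffs-s1}, so it suffices to work with~\eqref{e:coeffs-s1}.

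Next I would substitute $u_j=0$ into~\eqref{e:coeffs-s1}. For $A=A(\gamma)$ this yields $A(\gamma)_{i+1,i}=u_{i+1}=0$ and $A(\gamma)_{i,i+1}=u_i-k=-k$; for $A=A(\delta)$ it yields $A(\delta)_{i+1,i}=u_i=0$ and $A(\delta)_{i,i+1}=u_{i-1}-k=-k$. In either case $A_{i,i+1}=-k$ and $A_{i+1,i}=0$, which is the first assertion. For the remaining assertions, the cycle $(i,i+1)$ of $\digr$ has total weight $A_{i,i+1}+A_{i+1,i}$, which by the above equals $-k+0=-k$ for every such $i$, and this common value is $0$ precisely when $k=0$.

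Since every step is a direct substitution, I do not expect a genuine obstacle here; the only points requiring a little care are the index bookkeeping at the right end (one may need to enlarge $N$ by one so that $u_{i-1}=0$ for $i\geq N$ in the $A(\delta)$-case, which the asymmetric range "$i\geq N$ and $i\leq -N-1$" already anticipates) and the elementary check that in the tails one always lands in the "$<1$" branches of Lemma~\ref{coeffs-s2}.
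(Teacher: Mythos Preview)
Your proposal is correct and matches the paper's approach: the paper does not write out a proof at all, merely calling the lemma an ``immediate observation (following, for instance, from Lemmas~\ref{coeffs-s1} and~\ref{coeffs-s2}),'' and your substitution argument is exactly the verification this remark points to. Your care about the index bookkeeping (needing $u_{i-1}=0$ for the $A(\delta)$-entries at the right end) is warranted and handled appropriately.
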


With assumption (A$U$) we observe that $\digr$ always has cycles attaining the greatest total weight,
and that this weight is given by~\eqref{kminmax}.
Indeed, using Assumption (A$U$) we see that $u_{i-1}+u_i=0$ for all
$i\notin[-N+1,N]$. In the case when there is $i$ such that $u_{i-1}+u_i>0$,
this guarantees existence of solitons and leads to~\eqref{kminmax}. Otherwise the claim is trivial with
the greatest weight equal to $-k$ (see Lemma~\ref{l:tails}).

The saturation digraph of $\Phi$ can be introduced as in the introduction:
\begin{equation}
\label{satphi-def}
(i,j)\in\Sat(\Phi)\Leftrightarrow A_{ij}+\Phi_j=\Phi_i. 
\end{equation}

We now proceed with a proof (elementary but tedious) that with assumptions (A$U$) and (A$\Phi$), the 
solutions $\Phi$ are essentially the eigenvectors of the submatrix extracted from the interval $[-N-1,N+1]$.
In Proposition~\ref{p:sat-prop} we will show that $\Sat(\Phi)$ has an outgoing edge from all nodes in
$[-N-1, N+1]$ pointing inside this interval. We will confirm that the edges of $\Sat(\Phi)$ 
outside $[-N-1,N+1]$ are directed to this interval, and that the formula for $k$ is as suggested by~\eqref{kminmax}.
Based on these observations on $\Sat(\Phi)$, we show in Proposition~\ref{p:extension} that any solution $\Phi$
can be obtained as a unique extension of an eigenvector of the submatrix of $A$ extracted from $[-N-1,N+1]$. The 
description of solutions in terms of fundamental eigenvectors is obtained in Theorem~\ref{t:fundam}.

\begin{proposition}
\label{p:sat-prop}
Suppose that $U$ satisfies assumption (A$U$), $\Phi$ satisfies $A\otimes \Phi=\Phi$ and assumption
(A$\Phi$). Then
\begin{itemize}
\item[1.] $\Sat(\Phi)$ contains all backward edges $i\to i-1$ for $i>N$ and all
forward edges $i\to i+1$ for $i<-N$
\item[2.] In the restriction of $\Sat(\Phi)$ to $[-N-l,N+l]$ for $l\geq 1$, every node has an outgoing edge.
\item[3.] $k$ is given by
\begin{equation}
\label{kformula}
k=\min(\max_i(u_{i-1}+u_i),1).
\end{equation}
\end{itemize}
\end{proposition}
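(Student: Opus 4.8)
The plan is to combine the two structural facts already in hand: the eigenvector $\Phi$ has all components finite (so the Saturation-Digraph Theorem \ref{l:satprops} applies verbatim, in its infinite-but-locally-finite version noted after that theorem), and outside the interval $[-N,N]$ the matrix $A$ has the completely explicit form given by Lemma \ref{l:tails}: $A_{i,i+1}=-k$, $A_{i+1,i}=0$, and there are no other finite entries there. I would treat the three items more or less in the order stated, since item 1 feeds item 2 and items 1--2 together pin down $k$ in item 3.

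For item 1, consider a node $i>N$. By item 1 of Theorem \ref{l:satprops} it has an outgoing edge in $\Sat(\Phi)$; by Lemma \ref{l:tails} the only candidates are $i\to i-1$ (weight $0$) and $i\to i+1$ (weight $-k$), so $\Phi_i\ge\Phi_{i-1}$ and $\Phi_i\ge\Phi_{i+1}-k$, with at least one equality. Now bring in (A$\Phi$): there are arbitrarily large $N'$ with $\Phi_{N'}=\Phi_{N'+1}$. I would argue that $\Phi$ must be \emph{non-increasing} on $[N,\infty)$: if some forward edge $i\to i+1$ were the saturating one with $k>0$ we would get $\Phi_i=\Phi_{i+1}-k<\Phi_{i+1}$, i.e. a strict increase, and then chasing the eigenvector inequalities rightward (each node again forced to pick $i\to i-1$ or $i\to i+1$) forces $\Phi$ to keep strictly increasing past every $N'$, contradicting $\Phi_{N'}=\Phi_{N'+1}$; hence for every $i>N$ the saturating edge is the backward one $i\to i-1$, and moreover $\Phi_i=\Phi_{i-1}$ for all $i>N$, so $\Phi$ is eventually constant. (This also shows $k\ge 0$: the cycle $(i,i+1)$ has weight $-k$, which must be $\le 0$ by item 2 of Theorem \ref{l:satprops}.) The mirror argument on the left uses the other half of (A$\Phi$), $\Phi_{-N''-1}=\Phi_{-N''}-k$: for $i<-N$ each node picks $i\to i-1$ (weight $0$, giving $\Phi_i\ge\Phi_{i-1}$) or $i\to i+1$ (weight $-k$, giving $\Phi_i=\Phi_{i+1}-k$); a saturating backward edge would make $\Phi$ non-decreasing as $i\to-\infty$, contradicting the required drop of exactly $k$ between consecutive far-left indices (and the bound-state behaviour $\Phi_i\to-\infty$), so the saturating edge is always $i\to i+1$, proving the second half of item 1.

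Item 2 is then almost immediate. For any node $i$ with $-N-l\le i\le -N-1$ or $N+1\le i\le N+l$, item 1 gives an outgoing $\Sat(\Phi)$-edge that points \emph{towards} the origin ($i\to i-1$ on the right, $i\to i+1$ on the left), hence stays inside $[-N-l,N+l]$; for the interior nodes $-N\le i\le N$, item 1 of Theorem \ref{l:satprops} already supplies an outgoing saturating edge, and since $\digr$ only has edges between consecutive indices, that edge lands in $[-N-1,N+1]\subseteq[-N-l,N+l]$. So every node of the restriction has an outgoing edge, which is exactly the hypothesis needed later in Proposition \ref{p:extension}.

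For item 3, I would use item 3 of Theorem \ref{l:satprops}: the cycles of $\Sat(\Phi)$ coincide with the critical cycles, i.e. the two-cycles of maximal total weight, and by the discussion preceding Theorem \ref{t:first} (together with assumption (A$U$), which guarantees the maximising cycles exist) that maximal weight equals $\min(\max_i(u_{i-1}+u_i),1)-k$. On the other hand, from item 1 we have produced saturating edges: for $i>N$ the edge $i\to i-1$ is in $\Sat(\Phi)$ and for $i<-N$ the edge $i\to i+1$ is in $\Sat(\Phi)$, so the two-cycle $(i,i+1)$ for, say, $i\ge N$ lies entirely in $\Sat(\Phi)$ once we know both of its edges saturate — here I need the equalities $\Phi_i=\Phi_{i-1}$ (for $i>N$) and, on the far right where $A_{i,i+1}=-k$ and $\Phi$ is constant, $\Phi_i=\Phi_{i+1}-k$ forces... — more carefully, I would exhibit one concrete critical cycle of $\Sat(\Phi)$ of weight $-k$: take $i$ large; both $\Phi_i=\Phi_{i+1}$ (constancy, from item 1) and the eigenvector equation at $i$ read off that the edge $i\to i-1$ saturates, and running the argument one step further shows the reverse edge $i-1\to i$ has weight $-k$ and also saturates once $\Phi$ is constant, giving a $\Sat(\Phi)$-cycle of weight $-k$. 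By item 3 of Theorem \ref{l:satprops} every $\Sat(\Phi)$-cycle is critical, so $-k$ equals the maximal two-cycle weight $\min(\max_i(u_{i-1}+u_i),1)-k$; cancelling $-k$ — wait, that is vacuous — instead the correct reading is: the maximal weight is $\min(\max_i(u_{i-1}+u_i),1)-k$ by Theorem \ref{t:first}, and it must be realised by a $\Sat(\Phi)$-cycle, all of which we will show have weight $-k$ in the tails; conversely the soliton cycles of weight $\min(\max_i(u_{i-1}+u_i),1)-k$ are forced to be saturating, and being saturating two-cycles they have the \emph{maximal} weight, so $\min(\max_i(u_{i-1}+u_i),1)-k = -k$ cannot be right unless $\max_i(u_{i-1}+u_i)=0$; the clean statement is that \emph{all} $\Sat(\Phi)$-cycles have equal weight (being critical), the tail cycles give weight $-k$, hence $\min(\max_i(u_{i-1}+u_i),1)-k=$ that common weight forces $\min(\max_i(u_{i-1}+u_i),1)=k$ precisely when solitons are present, and when they are not, Lemma \ref{l:tails} already gives all cycles weight $-k$ and $\max_i(u_{i-1}+u_i)=0$ so \eqref{kformula} holds trivially.

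The main obstacle, and the place I expect the argument to need the most care, is the bookkeeping in item 1: turning the "a node picks one of two outgoing saturating edges" local statement into the global monotonicity of $\Phi$ on the two tails, and then extracting from (A$\Phi$) that the \emph{forbidden} direction never occurs. The subtlety is that a single strict step in the wrong direction has to be shown to propagate all the way out, which is where the arbitrarily-large $N'$, $N''$ in (A$\Phi$) are essential — a single $N'$ would not suffice. Once item 1 is nailed down in this form, items 2 and 3 are bookkeeping on top of Theorems \ref{l:satprops} and \ref{t:first}.
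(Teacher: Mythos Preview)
Your argument for item~1 (when $k>0$) is sound and essentially dual to the paper's: the paper starts from the anchor node $N'$ (resp.\ $-N''-1$) supplied by (A$\Phi$) and propagates \emph{inward}, at each step excluding the tail two-cycle from $\Sat(\Phi)$ (it has weight $-k<0$, hence is not critical) to force the next saturating edge to point towards the origin; you instead suppose a wrong-direction saturating edge at some tail node and propagate \emph{outward} to a contradiction with (A$\Phi$). Both work. Two minor issues: the parenthetical appeal to ``the bound-state behaviour $\Phi_i\to-\infty$'' is circular (that behaviour is a consequence of this proposition, not a hypothesis---your preceding clause about the required drop of $k$ already suffices), and you never treat the case $k=0$, where the strict-increase propagation breaks down. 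The paper handles $k=0$ separately: then the tail two-cycles themselves lie in $\Sat(\Phi)$, are therefore critical of weight~$0$, and the claims follow directly.

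Item~2 is fine.

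Item~3 has a genuine gap, which you yourself flag (``wait, that is vacuous''). You try to exhibit a $\Sat(\Phi)$-cycle in the tails, but when $k>0$ the tail two-cycle $(i,i+1)$ is \emph{not} in $\Sat(\Phi)$: only the backward edge $i+1\to i$ saturates, while the forward edge $i\to i+1$ does not (precisely because $\Phi$ is constant there and $k>0$). So no tail cycle is available to compare with the soliton cycles, and your attempted identity ``common $\Sat(\Phi)$-cycle weight $=$ tail weight $-k$'' never gets off the ground. The missing idea is exactly what item~2 is for: the restriction of $\Sat(\Phi)$ to the finite interval $[-N-1,N+1]$ has an outgoing edge at every node, so by pigeonhole it contains a cycle; that cycle, being a $\Sat(\Phi)$-cycle, has total weight~$0$ (sum the saturation equalities around it). Hence the maximal cycle weight in $\digr$ is~$0$, and equating this to the expression $\min(\max_i(u_{i-1}+u_i),1)-k$ from~\eqref{kminmax} gives~\eqref{kformula}. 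This is the route the paper takes.
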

\begin{proof}
As (A$\Phi$) and (A$U$) are satisfied, there exist arbitrarily large
$N',N''\geq N$, such that 
\begin{equation}
\label{Nprime}
\begin{split}
\Phi_{i-1}&=-k+\Phi_i=A_{i-1,i}+\Phi_i,\quad\text{for $i=-N''$},\\
\Phi_{i+1}&=\Phi_i=A_{i+1,i}+\Phi_i,\quad\text{for $i=N'$}.
\end{split}
\end{equation}
Thus the edges $N'+1\to N'$ and $-N''-1\to -N''$
belong to $\Sat(\Phi)$.

First we have to treat the exceptional case $k=0$. In this case 
the cycles $(N',N'+1)$ and $(-N'',-N''-1)$ belong to $\Sat(\Phi)$. Hence 
by Theorem~\ref{l:satprops} they have the greatest
cycle weight in $\digr$, which is equal to $0$. In this case also all cycles for 
$(i-1,i)$ for $i\leq -N$ and $(i,i+1)$ for $i\geq N$ 
have weight $0$ and belong to $\Sat(\Phi)$ by Theorem~\ref{l:satprops}, hence part 1. 
Having $u_i+u_{i-1}>0$ is impossible in this case, as it leads to cycles
with a positive weight. If $u_i+u_{i-1}\leq 0$ for all $i$, then the greatest cycle mean is $\max(u_i+u_{i-1})=0$,
which equals $0$. Hence also part 3. For part 2, notice that there are only edges $i\to i+1$
and $i\to i-1$ in $\digr$, and that the cycles $(N+l,N+l+1)$ and $(-N-l,-N-l-1)$ belong to $\Sat(\Phi)$ for any $l\geq 0$. 

If $k>0$ then all cycles $(i,i+1)$ for $i\geq N$ and $(i,i-1)$ for $i\leq -N$
have weight $-k<0$ and they should not be in $\Sat(\Phi)$. In particular, $\Sat(\Phi)$
does not contain the edge $N'\to N'+1$, since it contains $N'+1\to N'$. 
However, $N'$ has an outgoing edge in $\Sat(\Phi)$, which
must be $N'\to N'-1$. Then $N'-1\to N'$ cannot be there if $N'-1\geq N$, so $\Sat(\Phi)$ contains
the edge $N'-1\to N'-2$ as well. Proceeding this way we obtain that $\Sat(\Phi)$ contains
all backward edges $i+1\to i$ for $N\leq i\leq N'$. Similarly $\Sat(\Phi)$ contains all
forward edges $i-1\to i$ for $-N''\leq i\leq -N$. Since $N'$ and $N''$ are arbitrarily 
large, part 1. follows. We also obtain that in the restriction of $\digr$ to $[-N,N]$ and more generally to
$[-N-i,N+i]$ where $i\geq 0$ there are no edges of $\Sat(\Phi)$ pointing outside of the interval, hence part~2.
Indeed, by the outgoing edge property, for each node in $[-N-i,N+i]$ there is an outgoing edge, which has to
point, by part 1., to another node in $[-N-i,N+i]$. 
For part 3,  Theorem~\ref{l:satprops} implies that the greatest total weight of a two-cycle does not
exceed zero.
It amounts to show that $\Sat(\Phi)$ contains cycles, which necessarily have zero total weight. 
This follows from part 2, since the restriction of $\Sat(\Phi)$ to $[-N-1,N+1]$ is finite
and each node has an outgoing edge. Hence the greatest total weight is zero, and \eqref{kformula} follows
from~\eqref{kminmax}.
\end{proof}

Part 1. shows that (A$\Phi$) implies the bound state condition when $k>0$. More precisely, it
implies that $\Phi_i=\Phi_{i+1}$ for all $i\geq N$ and $\Phi_{i-1}=\Phi_i-k$ for all
$i\leq -N$. When $k>0$, the condition $l\geq 1$ in part 2. can be replaced by $l\geq 0$.
In the same vein $[-N-1,N+1]$ can be replaced with $[-N,N]$ in the statements below, when $k>0$.

Denote by $\Phi_{[N_1,N_2]}$ the restriction of $\Phi$ to the interval $[N_1,N_2]$,
and by $A_{[N_1,N_2]}$ the submatrix extracted from the nodes in the interval 
$[N_1,N_2]$.

\begin{proposition}
\label{p:extension}
Suppose that $U$ satisfies assumption (A$U$) and $k\geq 0$. Then
\begin{itemize}
\item[1.] If $v$ satisfies $A_{[-N-1,N+1]}\otimes v=v$, then it can be uniquely extended to 
$\Phi$ satisfying (A$\Phi$) and $A\otimes\Phi=\Phi$, such that $\Phi_{[-N-1,N+1]}=v$.
\item[2.] If $\Phi$ satisfies $A\otimes\Phi=\Phi$, then $\Phi_{[-N-1,N+1]}$
satisfies $A_{[-N-1,N+1]}\otimes\Phi_{[-N-1,N+1]}=\Phi_{[-N-1,N+1]}$.
\end{itemize}
\end{proposition}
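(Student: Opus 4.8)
The plan is to settle part~2 at once from Proposition~\ref{p:sat-prop}, and then to prove part~1 by writing down an explicit ``tail'' extension and a forcing argument for its uniqueness. (I use that the standing assumptions (A$U$) and (A$\Phi$) of this subsection are in force, so Proposition~\ref{p:sat-prop} applies.) For part~2, fix $i\in[-N-1,N+1]$. The quantity $\bigl(A_{[-N-1,N+1]}\otimes\Phi_{[-N-1,N+1]}\bigr)_i$ is a maximum over a subset of the terms $A_{ij}+\Phi_j$ whose overall maximum is $(A\otimes\Phi)_i=\Phi_i$, so ``$\leq$'' is immediate; for ``$\geq$'' I apply Proposition~\ref{p:sat-prop}(2) with $l=1$, which produces a node $j\in[-N-1,N+1]$ with $A_{ij}+\Phi_j=\Phi_i$, and this $j$ already realizes $\Phi_i$ in the restricted product. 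Equality at every such $i$ is exactly part~2.

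For the existence half of part~1, first note that the digraph of $A_{[-N-1,N+1]}$ is the bidirectional path on $\{-N-1,\dots,N+1\}$ with all edge weights real (Lemmas~\ref{coeffs-s1}, \ref{coeffs-s2}, \ref{l:tails}); in particular it is strongly connected, so a nonzero $v$ has all components finite (if $v$ is the zero vector the proposition is trivial). Evaluating $A_{[-N-1,N+1]}\otimes v=v$ at the two endpoints, where only one neighbour lies inside the interval, forces $v_{N+1}=v_N$ (from $A_{N+1,N}=0$) and $v_{-N-1}=v_{-N}-k$ (from $A_{-N-1,-N}=-k$), the ``tail'' entries of Lemma~\ref{l:tails}. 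I then define the extension by $\Phi_i=v_i$ on $[-N-1,N+1]$, $\Phi_i=v_{N+1}$ for $i>N+1$, and $\Phi_i=v_{-N-1}+(i+N+1)k$ for $i<-N-1$, and check $A\otimes\Phi=\Phi$ in three cases: for $i\in[-N,N]$ the equation only involves indices in $[-N-1,N+1]$ and hence is the already-valid submatrix equation; at the endpoints $\pm(N+1)$ the single extra term coming from $N+2$, resp.\ $-N-2$, equals $v_{N+1}-k$, resp.\ $v_{-N-1}-k$, and is dominated since $k\geq0$; at the exterior nodes the equation reduces to $\max(a,a-k)=a$. Condition (A$\Phi$) holds by construction, since $\Phi$ is eventually constant on the right and decreases by exactly $k$ at each step for $i\leq-N$.

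For uniqueness, let $\Phi$ be any extension of $v$ satisfying (A$\Phi$) and $A\otimes\Phi=\Phi$. Proposition~\ref{p:sat-prop}(1) puts the edges $i\to i-1$ for $i>N$ and $i\to i+1$ for $i<-N$ into $\Sat(\Phi)$; since the corresponding weights are $0$ and $-k$, this gives $\Phi_i=\Phi_{i-1}$ for $i>N$ and $\Phi_i=\Phi_{i+1}-k$ for $i<-N$. Hence $\Phi$ is determined on $\{i\geq N\}$ by $\Phi_N=v_N$ and on $\{i\leq-N\}$ by $\Phi_{-N}=v_{-N}$, so it must coincide with the extension constructed above.

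The point requiring care is the junction in the existence argument: one must notice that the endpoint rows of the submatrix already pin down $v_{N+1}=v_N$ and $v_{-N-1}=v_{-N}-k$, and then verify that reinstating the edge to $N+2$ (resp.\ $-N-2$) does not spoil the equation at $\pm(N+1)$ --- this is precisely where $k\geq0$ is used. The degenerate case $k=0$, in which both tails are forced to be constant, needs no separate treatment: the same formula and the same references cover it.
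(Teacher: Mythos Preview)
Your proof is correct and follows the same strategy as the paper: part~2 via Proposition~\ref{p:sat-prop}(2) for the ``$\geq$'' direction, and part~1 by using Proposition~\ref{p:sat-prop}(1) to force uniqueness of the tails together with a direct check that the constant/arithmetic extension works. Your version is more explicit than the paper's---you write down the extension formula, isolate the endpoint identities $v_{N+1}=v_N$ and $v_{-N-1}=v_{-N}-k$ coming from the one-sided rows of the submatrix, and verify the junction at $\pm(N+1)$ separately---whereas the paper compresses all of this into a one-line observation; but the underlying argument is the same.
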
 
\begin{proof}
1.: By  Proposition~\ref{p:sat-prop} part 1, every eigenvector satisfying (A$\Phi$), has to follow  $\Phi_{i+1}=\Phi_i$ for $i\geq N$
and $\Phi_i-k =\Phi_{i-1}$ for $i\leq -N$. Hence it is uniquely determined by $\Phi_{[-N-1,N+1]}$.
We also observe that $\Phi_{i+1}=\Phi_i$ implies $\Phi_i-k\leq\Phi_{i+1}$, and that $\Phi_i-k =\Phi_{i-1}$ implies
$\Phi_{i-1}\leq\Phi_i$ which makes it possible to extend $v$, satisfying $A_{[-N-1,N+1]}\otimes v=v$, to $\Phi$ which satisfies 
both $A\otimes\Phi=\Phi$ and (A$\Phi$).
 
2: If $A\otimes \Phi=\Phi$ then $A_{[N_1,N_2]}\otimes \Phi_{[N_1,N_2]}\leq \Phi_{[N_1,N_2]}$ for any $N_1,N_2$.
By Proposition~\ref{p:sat-prop} part 2, for each $i\in [-N-1,N+1]$ there is $j\in [-N-1,N+1]$ such that
$A_{ij}+\Phi_j=\Phi_i$, implying that $A_{[-N-1,N+1]}\otimes \Phi_{[-N-1,N+1]}\geq \Phi_{[-N-1,N+1]}$. Combining with the 
reverse inequality, we obtain part 2.
\end{proof}


For a (possibly infinite-dimensional) matrix $A$, the {\em Kleene star} is introduced as in~\eqref{kls-0} by
$$
A^*:=I\oplus A\oplus A^2\oplus\ldots.
$$
In the infinite-dimensional case, this may have infinite number of terms. However, in our case
the number of terms is always finite for any entry of the Kleene star, and the weight of the entry $A^*_{ij}$
equals to the greatest total weight (i.e., sum of weights of the edges) among all paths connecting $i$ to $j$.

As in the introduction, by the {\em critical columns} of $A^*$ we understand the columns of $A^*$ with indices taken from the
critical graph of $A$.

\begin{theorem}
\label{t:fundam}
Let $U$ satisfy (A$U$). The set of eigenvectors $A\otimes \Phi=\Phi$ satisfying (A$\Phi$) is nonempty if and only
if $k$ is given by~\eqref{kformula}. In this case it is the set of max-linear combinations
of the critical columns of $A^*$, which can be also computed as unique extensions of max-linear combinations
with the same coefficients, of the columns of $(A_{[-N-1,N+1]})^*$ with the same indices.  
\end{theorem}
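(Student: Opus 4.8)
\medskip
\noindent\emph{Plan of proof.}
The strategy is to transfer everything to the finite matrix $B:=A_{[-N-1,N+1]}$ and then apply the finite-dimensional spectral theorem, Theorem~\ref{t:max-spectrum}. Note first that (A$U$) forces $u_{i-1}+u_i=0$ outside $[-N+1,N]$, so $\max_i(u_{i-1}+u_i)\ge 0$ and the quantity in~\eqref{kformula} is $\ge 0$; whenever it equals $k$ the hypotheses of Propositions~\ref{p:sat-prop} and~\ref{p:extension} are met.

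\medskip
For the equivalence, the implication ``(A$\Phi$)-solution exists $\Rightarrow$ $k$ is given by~\eqref{kformula}'' is exactly Proposition~\ref{p:sat-prop}, part 3. For the converse I would argue that if $k$ is given by~\eqref{kformula} then $\lambda(B)=0$: by~\eqref{kminmax} (Theorem~\ref{t:first} together with the discussion following Lemma~\ref{l:tails}) the largest total weight of a two-cycle of $\digr$ equals $\min(\max_i(u_{i-1}+u_i),1)-k=0$, every simple cycle of $\digr$ has length two, and $\digr$ carries a critical (weight-$0$) two-cycle inside $[-N-1,N+1]$ — a two-cycle inside a soliton when solitons exist (solitons lie in $[-N,N]$ by (A$U$) and Definition~\ref{def:sol}), or the tail cycle $(N,N+1)$, of weight $-k=0$ by Lemma~\ref{l:tails}, when $k=0$ and no soliton exists. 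Hence $B$ has an eigenvector $v$ with $B\otimes v=v$ (for instance any critical column of $B^*$), and Proposition~\ref{p:extension}, part 1, extends $v$ to a solution $\Phi$ with (A$\Phi$); so the solution set is nonempty.

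\medskip
For the description, assume $k$ is as in~\eqref{kformula}, so $\lambda(A)=\lambda(B)=0$. Proposition~\ref{p:extension} makes $\Phi\mapsto\Phi_{[-N-1,N+1]}$ a bijection, with inverse ``unique extension'', from the (A$\Phi$)-solutions of $A\otimes\Phi=\Phi$ onto the eigenvectors of $B$, and Theorem~\ref{t:max-spectrum} identifies the latter set with the max-linear combinations $\bigoplus_{i\in S}\alpha_i\otimes B^*_{\cdot i}$, where $S$ meets each strongly connected component of $\crit(B)$ once. Two matching facts then finish the proof. First, $\crit(B)=\crit(A)$: every critical two-cycle of $\digr$ lies in $[-N-1,N+1]$ (same argument as above), so restricting to the interval changes neither the maximal cycle weight nor the set of cycles attaining it; in particular the strongly connected components, and a valid index set $S\subseteq[-N-1,N+1]$, are common to both. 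Second, $B^*=(A^*)_{[-N-1,N+1]}$: for $i,j$ in the interval, $A^*_{ij}$ is the weight of the unique monotone walk from $i$ to $j$ in $\digr$, and any walk that leaves the interval differs from one staying inside by closed excursions into the tails, each of total weight $-k\le 0$ by Lemma~\ref{l:tails}; hence the optimum is attained inside and $B^*_{ij}=A^*_{ij}$. Combining, the critical columns of $B^*$ are the restrictions of the critical columns of $A^*$; restricting $\Phi=\bigoplus_{i\in S}\alpha_i\otimes A^*_{\cdot i}$ to $[-N-1,N+1]$ yields $\Phi_{[-N-1,N+1]}=\bigoplus_{i\in S}\alpha_i\otimes B^*_{\cdot i}$, and by the uniqueness in Proposition~\ref{p:extension}, part 1, this $\Phi$ is precisely the extension of that combination — so the two descriptions coincide, with the same coefficients $\alpha_i$.

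\medskip
One degenerate situation needs a direct check: $k=0$ with $U\equiv 0$, when $\digr$ has all edge weights $0$, $\crit(A)$ is its single strongly connected component, $A^*_{\cdot 0}=(0,0,\dots)$, the (A$\Phi$)-solutions are exactly the constant vectors, and all assertions hold by inspection (and likewise for $B$). I expect the real content to be the two ``nothing is lost by truncation'' claims $\crit(B)=\crit(A)$ and $B^*=(A^*)_{[-N-1,N+1]}$; the remainder is bookkeeping with Propositions~\ref{p:sat-prop},~\ref{p:extension} and Theorem~\ref{t:max-spectrum}.
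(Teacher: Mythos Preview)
Your proof is correct and follows essentially the same route as the paper: reduce to the finite block $B=A_{[-N-1,N+1]}$, invoke Theorem~\ref{t:max-spectrum} there, and transport back via Proposition~\ref{p:extension}, the key identity being $B^*=(A^*)_{[-N-1,N+1]}$ (your tail-excursion argument and the paper's cycle-cancellation argument are the same observation). You are in fact more explicit than the paper on the ``only if'' direction of the equivalence and on the degenerate case $k=0$. One small step that both your write-up and the paper leave implicit: to conclude that $\bigoplus_{i\in S}\alpha_i\otimes A^*_{\cdot i}$ \emph{is} the unique (A$\Phi$)-extension of $\bigoplus_{i\in S}\alpha_i\otimes B^*_{\cdot i}$, you must first know that the critical columns $A^*_{\cdot i}$ themselves satisfy (A$\Phi$) --- otherwise the appeal to uniqueness in Proposition~\ref{p:extension} is circular. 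This is straightforward from the path interpretation (outside $[-N,N]$ the optimal path uses only tail edges of weight $0$ or $-k$), but it should be stated.
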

\begin{proof}
First note that $(A_{[-N-1,N+1]})^*=(A^*)_{[-N+1,N+1]}$. Indeed, for $i,j\in[-N-1,N+1]$, if a path connecting $i$ to $j$
has nodes outside $[-N-1,N+1]$ then it contains cycles. These cycles can be cancelled preserving connectivity of the path and
not decreasing its total weight, until all nodes of the path are in $[-N-1,N+1]$. Thus, for any such $i,j$ there exists an optimal path
which is entirely in $[-N-1,N+1]$. Then $(A_{[-N-1,N+1]})^*=(A^*)_{[-N+1,N+1]}$ follows 
entrywise by the path interpretation of Kleene star. 

Next, it can be verified that any column of $A^*$ with an index in the critical graph satisfies (A$\Phi$), and then so does any max-linear combination of these columns.

Any eigenvector $v$ of $A_{[-N-1,N+1]}$ is a max-linear combination of the columns of $(A_{[-N-1,N+1]})^*$ with indices
in the critical graph, see Theorem~\ref{t:max-spectrum}. Since $(A_{[-N-1,N+1]})^*=(A^*)_{[-N+1,N+1]}$, the max-linear combination with the same coefficients of the columns of $A^*$ with the same indices, is an extension of $v$ satisfying (A$\Phi$). Such
extension is unique by Proposition~\ref{p:extension} part 1.  Conversely, by Proposition~\ref{p:extension} part 2,
any vector satisfying $A\otimes\Phi=\Phi$ is a unique extension of an eigenvector of $A_{[-N-1,N+1]}$.
\end{proof}

\subsection{Undressing transform}

In this subsection, assumptions (A$\Phi$) and (A$U$) are assumed everywhere.

It is easy to see that each soliton corresponds to a critical component in
the associated digraphs of $A(\gamma)$ and $A(\delta)$. Hence it follows that
each soliton gives rise to a pair of fundamental eigenvectors of $A(\gamma)$ and
$A(\delta)$ which we denote by $\Phi^{(1)}$ and $\Phi^{(2)}$. We next examine the transformation $U\mapsto\Tilde{U}$ defined by
\begin{equation}
\label{e:undress}
\Tilde{u}_i:=u_i+\Phi^{(1)}_{i+1}+\Phi^{(2)}_i-\Phi^{(1)}_i-\Phi^{(2)}_{i+1}.
\end{equation}

To compute this transformation explicitly, we 
need the relations between neighbouring coordinates of $\Phi^{(1)}$ and $\Phi^{(2)}$.
They are as follows.

\begin{proposition}
\label{phirel-s1}
Suppose that (C1) holds. Let $(l,\ldots,l+s)$ be a soliton and let
$\Phi^{(1)}$ and $\Phi^{(2)}$ be the pair of fundamental eigenvectors
of $A(\gamma)$, resp. $A(\delta)$ associated with it. Then
\begin{equation}
\label{e:phirel-s1}
\begin{split}
&\Phi_i^{(1)}-\Phi_{i+1}^{(1)}=u_i-k,\; i\leq l,\\
&\Phi_{i+1}^{(1)}-\Phi_i^{(1)}=u_{i+1},\; i\geq l,\\
& \Phi_i^{(2)}-\Phi_{i+1}^{(2)}=u_{i-1}-k,\; i\leq l+s-1,\\
& \Phi_{i+1}^{(2)}-\Phi_i^{(2)}=u_i,\; i\geq l+s-1.
\end{split}
\end{equation}
\end{proposition}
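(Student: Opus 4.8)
The plan is to realise the fundamental eigenvectors $\Phi^{(1)}$ and $\Phi^{(2)}$ as critical columns of the Kleene stars $A(\gamma)^*$ and $A(\delta)^*$, to describe those columns via the principal-path interpretation, and to observe that in case (C1) the optimal paths in $\digr(\gamma)$ and $\digr(\delta)$ are monotone; the four identities then drop out by reading off edge weights from Lemma~\ref{coeffs-s1}. First note that in case (C1), part~3 of Proposition~\ref{p:sat-prop} (equivalently Theorem~\ref{t:fundam}) gives $k=\max_i(u_{i-1}+u_i)=v_{\sup}$, so by Lemma~\ref{coeffs-s1} every simple cycle of $\digr(\gamma)$ and $\digr(\delta)$ is a two-cycle $(j,j+1)$ of total weight $u_j+u_{j+1}-k\le v_{\sup}-k=0$, with equality exactly when $u_j+u_{j+1}=v_{\sup}$; for a soliton $(l,\dots,l+s)$ this happens precisely for $l\le j\le l+s-1$, so its critical component in $\digr(\gamma)$ has node set $\{l,\dots,l+s\}$ and its critical component in $\digr(\delta)$ has node set $\{l+1,\dots,l+s+1\}$.

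Since critical columns of a Kleene star whose indices lie in the same strongly connected component of the critical graph differ only by an additive constant, and \eqref{e:phirel-s1} involves only differences of consecutive coordinates, I may take $\Phi^{(1)}=A(\gamma)^*_{\cdot c}$ and $\Phi^{(2)}=A(\delta)^*_{\cdot c'}$ with $c,c'$ chosen freely in the respective critical components; I would use $c=l$ and $c'=l+s+1$. The key observation is that a greatest-weight walk from $i$ to $c$ in these digraphs is the monotone one: the only edges join consecutive integers, so the weight of any walk from $i$ to $c$ equals the weight of the monotone (strictly increasing or strictly decreasing) walk plus a nonnegative integer combination of the two-cycle weights $u_j+u_{j+1}-k$, each $\le 0$ in case (C1); hence, by the principal-path interpretation of the Kleene star, $A^*_{ic}$ is the sum of the edge weights along the monotone walk (with $A^*_{cc}=0$, consistently, since $c$ lies on a zero-weight cycle and no cycle is positive).

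It remains to read off the identities. By Lemma~\ref{coeffs-s1}, in $\digr(\gamma)$ the edge $j\to j+1$ has weight $u_j-k$ and the edge $j+1\to j$ has weight $u_{j+1}$; with $c=l$ this yields $\Phi^{(1)}_i=\sum_{j=i}^{l-1}(u_j-k)$ for $i\le l$ and $\Phi^{(1)}_i=\sum_{j=l+1}^{i}u_j$ for $i\ge l$, from which the first relation (for $i\le l-1$) and the second relation (for $i\ge l$) follow at once. Symmetrically, in $\digr(\delta)$ the edge $j\to j+1$ has weight $u_{j-1}-k$ and the edge $j+1\to j$ has weight $u_j$; with $c'=l+s+1$ this yields $\Phi^{(2)}_i=\sum_{m=i-1}^{l+s-1}(u_m-k)$ for $i\le l+s+1$ and $\Phi^{(2)}_i=\sum_{m=l+s+1}^{i-1}u_m$ for $i\ge l+s+1$, giving the third relation (for $i\le l+s-1$) and the fourth relation (for $i\ge l+s+1$).

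The main obstacle is the junction, not any single computation: the first formula for $\Phi^{(1)}$ (resp.\ $\Phi^{(2)}$) covers small $i$ and the second covers large $i$, and one must match them at the boundary indices $i=l$ for $\Phi^{(1)}$ and $i=l+s-1$, $i=l+s$ for $\Phi^{(2)}$. This is exactly where the soliton-defining equalities $u_j+u_{j+1}=v_{\sup}=k$ enter: for instance $\Phi^{(1)}_l-\Phi^{(1)}_{l+1}=-u_{l+1}$ from the formulas, which equals $u_l-k$ precisely because $u_l+u_{l+1}=v_{\sup}$; and $\Phi^{(2)}_{l+s}-\Phi^{(2)}_{l+s-1}=u_{l+s-1}$ using $u_{l+s-2}+u_{l+s-1}=v_{\sup}$, while $\Phi^{(2)}_{l+s+1}-\Phi^{(2)}_{l+s}=u_{l+s}$ using $u_{l+s-1}+u_{l+s}=v_{\sup}$ (the first of these two needing the soliton to contain at least three indices, $s\ge 2$; for $s=1$ the fourth relation is valid in the range $i\ge l+s$). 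One also needs the elementary fact, immediate from Lemma~\ref{coeffs-s1}, that $\digr(\gamma)$ and $\digr(\delta)$ have no cycles beyond these two-cycles — every node $i$ has exactly the two out-neighbours $i-1$ and $i+1$ — since the whole ``monotone is optimal'' argument rests on it, and I would state it explicitly.
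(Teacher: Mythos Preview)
Your proposal is correct and follows essentially the same line as the paper: realise $\Phi^{(1)},\Phi^{(2)}$ as critical columns of $A(\gamma)^*,A(\delta)^*$, use the principal-path interpretation together with the fact that all two-cycles have weight $\le 0$ in case (C1) so monotone paths are optimal, and read off the consecutive differences from Lemma~\ref{coeffs-s1}.

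The one presentational difference worth noting is how the boundary (``junction'') indices are handled. You fix a single critical column ($c=l$, $c'=l+s+1$) and then separately verify the overlap cases $i=l$ and $i=l+s,\,l+s-1$ using the soliton identity $u_j+u_{j+1}=v_{\sup}=k$. The paper instead exploits that columns with indices in the same critical component are max-plus proportional: it writes $\Phi^{(1)}_j-\Phi^{(1)}_{l+1}=A(\gamma)^*_{j,l+1}$ for $j\le l$ and $\Phi^{(1)}_j-\Phi^{(1)}_{l}=A(\gamma)^*_{j,l}$ for $j\ge l$, and similarly switches between the columns $l+s-1$ and $l+s$ for $\Phi^{(2)}$, so the boundary case is absorbed automatically and no separate junction check is needed. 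Your choice $c'=l+s+1$ is in fact slightly more robust than the paper's $l+s-1$ when $s=1$, and your remark that the fourth relation at $i=l+s-1$ genuinely requires $s\ge 2$ is a correct observation about the statement itself.
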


\begin{proposition}
\label{phirel-s2}
Suppose that (C2) holds. Let $(l,\ldots,l+s)$ be a soliton and let
$\Phi^{(1)}$ and $\Phi^{(2)}$ be the pair of fundamental eigenvectors
of $A(\gamma)$, resp. $A(\delta)$ associated with it. Then
\begin{equation}
\label{e:phirel-s2}
\begin{split}
&\Phi_i^{(1)}-\Phi_{i+1}^{(1)}=
\begin{cases}
u_i-1, &\text{if $u_i+u_{i-1}<1$},\\
-u_{i-1}, &\text{if $u_i+u_{i-1}\geq 1$}
\end{cases},
\; i\leq l,\\ 
&\Phi_{i+1}^{(1)}-\Phi_i^{(1)}=
\begin{cases}
u_{i+1}, &\text{if $u_i+u_{i+1}<1$},\\
1-u_i, &\text{if $u_i+u_{i+1}\geq 1$}
\end{cases},
\; i\geq l,\\ 
&\Phi_{i}^{(2)}-\Phi_{i+1}^{(2)}=
\begin{cases}
u_{i-1}-1, &\text{if $u_i+u_{i-1}<1$},\\
-u_i, &\text{if $u_i+u_{i-1}\geq 1$}
\end{cases},
\; i\leq l+s-1,\\
&\Phi_{i+1}^{(2)}-\Phi_i^{(2)}=
\begin{cases}
u_i, &\text{if $u_i+u_{i+1}<1$},\\
1-u_{i+1}, &\text{if $u_i+u_{i+1}\geq 1$}
\end{cases},
\; i\geq l+s-1.
\end{split}
\end{equation}
\end{proposition}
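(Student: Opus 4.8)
The plan is to read each relation in \eqref{e:phirel-s2} as the weight of a single edge of the associated digraph lying on a \emph{monotone} greatest-weight path from the given index to a critical node, exactly in the spirit of the (C1) argument for \eqref{e:phirel-s1}. First I would record that in case (C2) we are in the regime $k=1$: since $\sup_i(u_i+u_{i+1})\geq 1$ and (A$U$) forces $u_{i-1}+u_i=0$ outside a finite window, Proposition~\ref{p:sat-prop} part~3 gives $k=\min(\max_i(u_{i-1}+u_i),1)=1$ via \eqref{kformula}. With $k=1$ the entries in \eqref{e:coeffs-s2} simplify, and a termwise check shows that the four right-hand sides in \eqref{e:phirel-s2} are precisely $A(\gamma)_{i,i+1}$, $A(\gamma)_{i+1,i}$, $A(\delta)_{i,i+1}$ and $A(\delta)_{i+1,i}$. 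So the proposition is equivalent to the assertion that the indicated forward and backward edges lie in the respective saturation graphs.

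Next I would locate the critical cycles. Computing the two-cycle weights from \eqref{e:coeffs-s2} with $k=1$, or reading off Figures~\ref{f:tail} and~\ref{f:head}, the soliton's critical component of $\Digr(\gamma)$ is the tail two-cycle on $\{l,l+1\}$, while that of $\Digr(\delta)$ is the head two-cycle on $\{l+s,l+s+1\}$; every other two-cycle inside the soliton has weight strictly below $1-k=0$ unless $u_i+u_{i+1}=1$. I would then take $\Phi^{(1)}=A(\gamma)^*_{\cdot l}$ and $\Phi^{(2)}=A(\delta)^*_{\cdot(l+s)}$, so that by the principal-path interpretation of the Kleene star $\Phi^{(1)}_i$ is the greatest weight of a path from $i$ to $l$ and $\Phi^{(2)}_i$ the greatest weight of a path from $i$ to $l+s$.

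The core step is a monotonicity argument. Because $\Digr(\gamma)$ and $\Digr(\delta)$ are line digraphs in which each node joins only its two neighbours, any walk from $i$ to the critical node has an edge-multiset equal to that of the monotone path between them plus a nonnegative collection of two-cycles; since by Theorem~\ref{l:satprops} (in its infinite-dimensional form) every cycle has weight $\le 0$, discarding these two-cycles does not decrease the weight, so the monotone path is of greatest weight. Hence for an index to the left of the critical node the optimal path uses only forward edges, giving $\Phi_i-\Phi_{i+1}=A_{i,i+1}$, and to the right only backward edges, giving $\Phi_{i+1}-\Phi_i=A_{i+1,i}$. Substituting the simplified entries from \eqref{e:coeffs-s2} then produces the four cases of \eqref{e:phirel-s2}.

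The hard part will be the bookkeeping at the boundary index where the forward and backward descriptions meet, and in particular pinning down the split point for $\Phi^{(2)}$. Unlike in case (C1), where the $\Digr(\delta)$-critical component is large and reaches the index $l+s-1$, in case (C2) the head critical cycle is (generically) only $(l+s,l+s+1)$, so the natural split for $\Phi^{(2)}$ sits at the critical node $l+s$. The two one-sided formulas can be asserted simultaneously at that node precisely because the head two-cycle has weight exactly $1-k=0$: there $\Phi^{(2)}_{l+s}-\Phi^{(2)}_{l+s+1}=A(\delta)_{l+s,l+s+1}$ and $\Phi^{(2)}_{l+s+1}-\Phi^{(2)}_{l+s}=A(\delta)_{l+s+1,l+s}$ are compatible exactly because the two edge weights sum to zero. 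Verifying that the stated ranges coincide with the genuinely saturated ranges near $i=l+s-1$, and separately treating the degenerate case $s=1$ and the interior indices with $u_i+u_{i+1}=1$ that create extra critical two-cycles, is the step that will require the most care.
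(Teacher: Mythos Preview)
Your approach is essentially the paper's: take the fundamental eigenvectors as critical columns of the Kleene star, use that all two-cycles have nonpositive weight so the optimal path from any node to a critical node is monotone, conclude that consecutive differences equal single edge weights, and substitute the entries from Lemma~\ref{coeffs-s2}. Your hesitation about the split point for $\Phi^{(2)}$ is in fact sharper than the paper's own argument: the proof text places the critical cycle of $A(\delta)$ at $(l+s-1,l+s)$, but, as you note and as Figures~\ref{f:head} and~\ref{f:onesoliton} confirm, it is generically $(l+s,l+s+1)$, so the fourth relation at the overlap index $i=l+s-1$ only holds when $u_{l+s-1}+u_{l+s}=1$; fortunately the downstream uses (Theorem~\ref{t:undress}) never invoke that borderline index.
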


\begin{proof} (Propositions~\ref{phirel-s1} and~\ref{phirel-s2}) 
In both cases, we essentially have to examine which edges of $A(\gamma)$
and $A(\delta)$ are in the saturation graphs of $\Phi^{(1)}$ and
$\Phi^{(2)}$. Then we use the explicit formulas for the coefficients of
$A(\gamma)$ and $A(\delta)$, see Lemmas~\ref{coeffs-s1} and~\ref{coeffs-s2}. 

In both cases (C1) and (C2), the cycle $(l,l+1)$ is critical in $A(\gamma)$, thus $\Phi^{(1)}$
can be chosen as $l$ or $l+1$ column of $A(\gamma)^*$.  If $j\leq l$ then 
\begin{equation}
\label{difference1}
\Phi^{(1)}_j-\Phi^{(1)}_{l+1}=A(\gamma)^*_{j,l+1}= \sum_{i=j}^{l} A(\gamma)_{i,i+1}.
\end{equation}
Hence $\Phi^{(1)}_i-\Phi^{(1)}_{i+1}=A(\gamma)_{i,i+1}$ for all $i\leq l$.
Analogously for $j\geq l$
\begin{equation}
\label{difference2}
\Phi^{(1)}_j-\Phi^{(1)}_l=A(\gamma)^*_{j,l}= \sum_{i=l+1}^{j} A(\gamma)_{i,i-1}.
\end{equation}
Hence $\Phi^{(1)}_{i+1}-\Phi^{(1)}_i=A(\gamma)_{i+1,i}$ for all $i\geq l$.

In both cases (C1) and (C2),
the cycle $(l+s-1,l+s)$ is critical in $A(\delta)$,  
thus $\Phi^{(2)}$
can be chosen as $l+s-1$ or $l+s$ column of $A(\delta)^*$.
Arguing as above, we obtain that
$\Phi^{(2)}_i-\Phi^{(2)}_{i+1}=A(\delta)_{i,i+1}$ for all $i\leq l+s-1$, and that
$\Phi^{(2)}_{i+1}-\Phi^{(2)}_i=A(\delta)_{i+1,i}$ for all $i\geq l+s-1$.
Summarizing we have:
\begin{equation}
\label{e:phirel-s2-simple}
\begin{split}
&\Phi^{(1)}_i-\Phi^{(1)}_{i+1}=A(\gamma)_{i,i+1},\quad i\leq l,\\
&\Phi^{(1)}_{i+1}-\Phi^{(1)}_i=A(\gamma)_{i+1,i},\quad i\geq l,\\
&\Phi^{(2)}_i-\Phi^{(2)}_{i+1}=A(\delta)_{i,i+1},\quad i\leq l+s-1,\\
&\Phi^{(2)}_{i+1}-\Phi^{(2)}_i=A(\delta)_{i+1,i},\quad i\geq l+s-1.
\end{split}
\end{equation}
It remains to use the explicit expressions for coefficients of $A(\gamma)$ and $A(\delta)$.
\end{proof}

Next we establish explicit expressions for undressing~\eqref{e:undress}, in the situations (C1) and (C2).

\begin{theorem}
\label{t:undress}
Let $(l,\ldots,l+s)$ be a soliton and let
$\Phi^{(1)}$ and $\Phi^{(2)}$ be the pair of fundamental eigenvectors
of $A(\gamma)$, resp. $A(\delta)$ associated with it. Then in the case of
(C1)
\begin{equation}
\label{e:undress1}
\Tilde{u}_i=
\begin{cases}
u_{i-1}, &\text{if $i\leq l$},\\
u_{i-1}=u_{i+1}, &\text{if $l<i<l+s$},\\
u_{i+1}, &\text{if $i\geq l+s$},
\end{cases}
\end{equation}
and in the case of (C2)
\begin{equation}
\label{e:undress2}
\Tilde{u}_i=
\begin{cases}
u_{i-1}, &\text{if $i\leq l$},\\
1-u_i, &\text{if $l<i<l+s$},\\
u_{i+1}, &\text{if $i\geq l+s$}.
\end{cases}
\end{equation}
\end{theorem}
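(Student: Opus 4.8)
The plan is to substitute the differences of neighbouring coordinates of $\Phi^{(1)}$ and $\Phi^{(2)}$, as given by Propositions~\ref{phirel-s1} and~\ref{phirel-s2}, directly into the definition~\eqref{e:undress} of the undressing transform and simplify case by case. Rewriting~\eqref{e:undress} as
\begin{equation*}
\Tilde{u}_i = u_i + \bigl(\Phi^{(1)}_{i+1}-\Phi^{(1)}_i\bigr) - \bigl(\Phi^{(2)}_{i+1}-\Phi^{(2)}_i\bigr),
\end{equation*}
I see that the computation splits naturally according to whether $i$ lies to the left of the soliton ($i\le l$), strictly inside it ($l<i<l+s$), or to the right ($i\ge l+s$). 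The first of these requires knowing $\Phi^{(1)}_{i+1}-\Phi^{(1)}_i$ for $i\le l$, which Propositions~\ref{phirel-s1}/\ref{phirel-s2} give only at $i=l$ directly; however, for $i<l$ one has $-(\Phi^{(1)}_i-\Phi^{(1)}_{i+1})$ from the first line of those propositions, and the second difference $\Phi^{(2)}_{i+1}-\Phi^{(2)}_i$ for $i\le l<l+s-1$ is covered by the third line. So in the region $i\le l$ I plug in $\Phi^{(1)}_{i+1}-\Phi^{(1)}_i = k-A(\gamma)_{i,i+1}$ (case C1: $=k-(u_i-k)$, wait — simply $=-(u_i-k)$, i.e. $-(u_i-k)$) and the $\Phi^{(2)}$-difference $=-A(\delta)_{i,i+1}$, then use the explicit coefficient formulas to collapse everything. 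The same bookkeeping, using the second and fourth lines of the propositions, handles $i\ge l+s$.

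Concretely, in case (C1): for $i\le l$ one gets $\Tilde u_i = u_i + \bigl(k-u_i\bigr)\ ?\ \dots$ — rather, $\Tilde u_i = u_i - (u_i-k) - ?$; the cleanest route is to note that for $i<l$ both $\Phi^{(1)}$ and $\Phi^{(2)}$ satisfy the ``left-tail'' relations, giving $\Phi^{(1)}_{i+1}-\Phi^{(1)}_i = -(u_i-k)$ and $\Phi^{(2)}_{i+1}-\Phi^{(2)}_i = -(u_{i-1}-k)$, whence $\Tilde u_i = u_i - (u_i-k) + (u_{i-1}-k) = u_{i-1}$. The boundary value $i=l$ must be checked separately because there $\Phi^{(1)}$ switches regime; one verifies $\Phi^{(1)}_{l+1}-\Phi^{(1)}_l = u_{l+1}$ (soliton relation), and $\Phi^{(2)}_{l+1}-\Phi^{(2)}_l = -(u_{l-1}-k)$ still holds since $l\le l+s-1$, giving $\Tilde u_l = u_l + u_{l+1} + u_{l-1}-k$; using $k=\min(\max_i(u_{i-1}+u_i),1)$ from Proposition~\ref{p:sat-prop}(3), which equals $v_{\sup}=u_l+u_{l+1}$ in case (C1), this collapses to $\Tilde u_l = u_{l-1}$, consistent. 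For $l<i<l+s$ one uses the soliton relations on both vectors, $\Phi^{(1)}_{i+1}-\Phi^{(1)}_i = u_{i+1}$ and $\Phi^{(2)}_{i+1}-\Phi^{(2)}_i = u_i$, so $\Tilde u_i = u_i + u_{i+1} - u_i = u_{i+1}$, and since $u_{i-1}+u_i = u_i+u_{i+1} = v_{\sup}$ inside the soliton we also have $u_{i+1}=u_{i-1}$. The region $i\ge l+s$ is symmetric. Case (C2) is the same computation with the two-branch coefficient formulas; inside the soliton $u_i+u_{i+1}\ge 1$ forces the lower branch, $\Phi^{(1)}_{i+1}-\Phi^{(1)}_i = 1-u_i$ and $\Phi^{(2)}_{i+1}-\Phi^{(2)}_i = 1-u_{i+1}$, giving $\Tilde u_i = u_i + (1-u_i) - (1-u_{i+1}) = u_{i+1}$ — but wait, the claimed value is $1-u_i$; I should instead pair $\Phi^{(1)}_{i+1}-\Phi^{(1)}_i$ with the head-relation for $\Phi^{(2)}$, and the correct pairing (since $i<l+s$ so $\Phi^{(2)}$ uses its \emph{left} relation $\Phi^{(2)}_i-\Phi^{(2)}_{i+1} = -u_i$) gives $\Phi^{(2)}_{i+1}-\Phi^{(2)}_i = u_i$, hence $\Tilde u_i = u_i + (1-u_i) - u_i = 1-u_i$, as claimed.

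The main obstacle is precisely this bookkeeping at and near the two endpoints $i=l$ and $i=l+s$, where $\Phi^{(1)}$ changes from its left-tail relation to its soliton relation and $\Phi^{(2)}$ from its soliton relation to its right-tail relation; these are the only $i$ for which the two propositions do not hand over a single unambiguous formula for both needed differences, and one must carefully track which of the two regimes applies to each vector. It is also here that the value of $k$ from Proposition~\ref{p:sat-prop}(3) is genuinely used to make the endpoint expressions collapse. Everything else is a routine substitution of the relations from Propositions~\ref{phirel-s1} and~\ref{phirel-s2}, followed by the identities $u_{i-1}+u_i = v_{\sup}$ (case C1) resp. $u_{i-1}+u_i\ge 1$ (case C2) that hold throughout the interior of a soliton, and I would present it as three short paragraphs (left of soliton, interior, right of soliton) for each of the two cases.
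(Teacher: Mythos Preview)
Your approach is correct and matches the paper's own proof: substitute the neighbour-difference relations of Propositions~\ref{phirel-s1} and~\ref{phirel-s2} into~\eqref{e:undress} and split into the three regions $i\le l$, $l<i<l+s$, $i\ge l+s$, using the soliton identities (equivalently, the value of $k$ from~\eqref{kformula}) to collapse the interior and boundary cases. Your explicit treatment of the endpoints $i=l$ and $i=l+s$ is in fact more careful than the paper's write-up; the only cosmetic point is that in the (C1) interior your formula $\Phi^{(2)}_{i+1}-\Phi^{(2)}_i=u_i$ comes from the \emph{third} relation of~\eqref{e:phirel-s1} together with $u_{i-1}+u_i=k$, not directly from the fourth, so label it accordingly in the final version.
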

\begin{proof}
The computation is straightforward, using~\eqref{e:undress} and \eqref{e:phirel-s1} 
in the case of (C1), or~\eqref{e:undress} and \eqref{e:phirel-s2} in the case of (C2).

Namely in the case $i\leq l$ we use the first and the third relations of~\eqref{e:phirel-s1} and~\eqref{e:phirel-s2}, substituting
them into~\eqref{e:undress}. In the case $i\geq l+s$ we use the second and the fourth relations
of~\eqref{e:phirel-s1} and~\eqref{e:phirel-s2}. 

In the case $l<i<l+s$, we use the second and the
third relations of \eqref{e:phirel-s1} and~\eqref{e:phirel-s2}. If (C1) holds, note that for $l<i<l+s$ the computation yields
$u_{i+1}+u_{i-1}+u_i-k$. However all cycles
$(i,i-1)$ and $(i,i+1)$ are critical with $u_{i+1}+u_i-k=u_i+u_{i-1}-k=0$, hence we obtain $u_{i-1}=u_{i+1}$ as in~\eqref{e:undress1}. 
If (C2) holds, then the computation (use the case $u_i+u_{i+1}\geq 1$ since we are inside the soliton) 
yields $2-u_i-k$, which is $1-u_i$ since $k=1$.
\end{proof}

We obtain that in the case of (C1), according to~\eqref{e:undress1},
the selected soliton $(l,\ldots,l+s)$ loses two units of its length, and the rest of the potential $U$ gets shifted to the right before the selected soliton, and to the left after the selected soliton. The remaining part of the selected soliton
also gets shifted, but the direction does not matter since $u_{i-1}=u_{i+1}$ for all $l<i<l+s$.

In the case of (C2), according to~\eqref{e:undress2}, 
the selected soliton also loses at least $2$ units of length on the ends, 
and the remaining part may shrink and brake into several solitons. Indeed, we have $$\Tilde{u}_l+\Tilde{u}_{l+1}=
1-u_{l+1}+u_{l-1}\leq u_l+u_{l-1}<1$$ and also
$$\Tilde{u}_{l+s-1}+\Tilde{u}_{l+s}=
1-u_{l+s-1}+u_{l+s+1}\leq u_{l+s}+u_{l+s+1}<1.$$
For $i$ in $l<i<l+s-1$ we obtain
$$\Tilde{u}_i+\Tilde{u}_{i+1}=2-u_i-u_{i+1},$$
which is not less than $1$ only if $u_i+u_{i+1}=1$.

An important special case of solitons in case (C2) is when they are of the form $(a\; 1\ldots 1\;b)$ with $a,b\geq 0$, and when all
elements between them and outside the soliton area equal $0$. In this case, such soliton completely disappears 
turning into $0$ background after the corresponding undressing transform (which justifies the name ``undressing'').

We give a graphical example of undressing where the selected soliton
has length 3. Figure~\ref{f:undr-C1} demonstrates undressing in the case (C1): look at the difference
between $U$ and $\Tilde{U}$. Figure~\ref{f:undr-C2} demonstrates undressing in the case (C2).

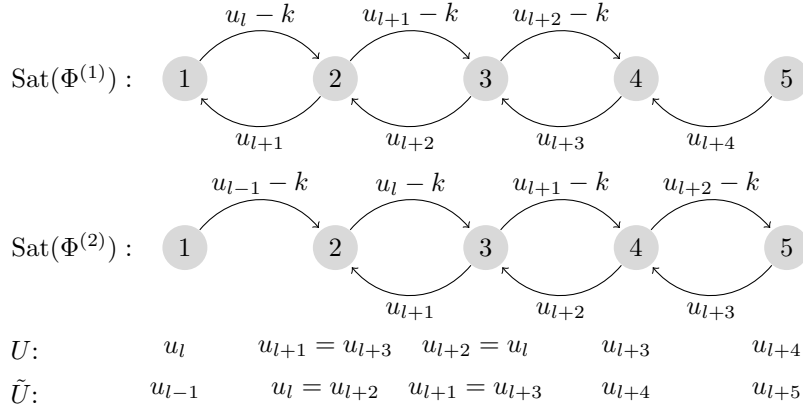
\begin{figure}
\centering
\begin{tikzpicture}[shorten >=1pt,->]
  \tikzstyle{vertex}=[circle,fill=black!15,minimum size=17pt,inner sep=1pt]
\foreach \name/\x in {1/3, 2/5, 3/7, 4/9, 5/11}
    \node[vertex] (\name) at (\x,0) {$\name$};
\draw[xshift=1.5cm] node{$\Sat(\Phi^{(1)}):$};

\draw (1) .. controls +(50:1cm) and +(130:1cm) .. (2);
\draw (2) .. controls +(50:1cm) and +(130:1cm) .. (3);
\draw (3) .. controls +(50:1cm) and +(130:1cm) .. (4);

\draw[xshift=4cm,yshift=0.85cm] node{$u_l-k$};
\draw[xshift=6cm,yshift=0.85cm] node{$u_{l+1}-k$};
\draw[xshift=8cm,yshift=0.85cm] node{$u_{l+2}-k$};


\draw (5) .. controls +(-130:1cm) and +(-50:1cm) .. (4);
\draw (4) .. controls +(-130:1cm) and +(-50:1cm) .. (3);
\draw (3) .. controls +(-130:1cm) and +(-50:1cm) .. (2);
\draw (2) .. controls +(-130:1cm) and +(-50:1cm) .. (1);

\draw[xshift=4cm,yshift=-0.85cm] node{$u_{l+1}$};
\draw[xshift=6cm,yshift=-0.85cm] node{$u_{l+2}$};
\draw[xshift=8cm,yshift=-0.85cm] node{$u_{l+3}$};
\draw[xshift=10cm,yshift=-0.85cm] node{$u_{l+4}$};

\end{tikzpicture}

\begin{tikzpicture}[shorten >=1pt,->]
  \tikzstyle{vertex}=[circle,fill=black!15,minimum size=17pt,inner sep=1pt]
\foreach \name/\x in { 1/3, 2/5, 3/7, 4/9, 5/11}
    \node[vertex] (\name) at (\x,0) {$\name$};
\draw[xshift=1.5cm] node{$\Sat(\Phi^{(2)}):$};

\draw (1) .. controls +(50:1cm) and +(130:1cm) .. (2);
\draw (2) .. controls +(50:1cm) and +(130:1cm) .. (3);
\draw (3) .. controls +(50:1cm) and +(130:1cm) .. (4);
\draw (4) .. controls +(50:1cm) and +(130:1cm) .. (5);

\draw[xshift=4cm,yshift=0.85cm] node{$u_{l-1}-k$};
\draw[xshift=6cm,yshift=0.85cm] node{$u_l-k$};
\draw[xshift=8cm,yshift=0.85cm] node{$u_{l+1} -k$};
\draw[xshift=10cm,yshift=0.85cm] node{$u_{l+2}-k$};

\draw (5) .. controls +(-130:1cm) and +(-50:1cm) .. (4);
\draw (4) .. controls +(-130:1cm) and +(-50:1cm) .. (3);
\draw (3) .. controls +(-130:1cm) and +(-50:1cm) .. (2);

\draw[xshift=6cm,yshift=-0.85cm] node{$u_{l+1}$};
\draw[xshift=8cm,yshift=-0.85cm] node{$u_{l+2}$};
\draw[xshift=10cm,yshift=-0.85cm] node{$u_{l+3}$};

\end{tikzpicture}

\begin{tikzpicture}[shorten >=1pt,->]

\draw[xshift=1.5cm] node{$U$:};

\draw[xshift=3.5cm,yshift=0cm] node{$u_{l}$};
\draw[xshift=5.5cm,yshift=0cm] node{$u_{l+1}=u_{l+3}$};
\draw[xshift=7.5cm,yshift=0cm] node{$u_{l+2}=u_l$};
\draw[xshift=9.5cm,yshift=0cm] node{$u_{l+3}$};
\draw[xshift=11.5cm,yshift=0cm] node{$u_{l+4}$};

\end{tikzpicture}

\begin{tikzpicture}[shorten >=1pt,->]

\draw[xshift=1.5cm] node{$\Tilde{U}$:};

\draw[xshift=3.5cm,yshift=0cm] node{$u_{l-1}$};
\draw[xshift=5.5cm,yshift=0cm] node{$u_{l}=u_{l+2}$};
\draw[xshift=7.5cm,yshift=0cm] node{$u_{l+1}=u_{l+3}$};
\draw[xshift=9.5cm,yshift=0cm] node{$u_{l+4}$};
\draw[xshift=11.5cm,yshift=0cm] node{$u_{l+5}$};

\end{tikzpicture}
\caption{\label{f:undr-C1} Undressing in (C1)}
\end{figure}

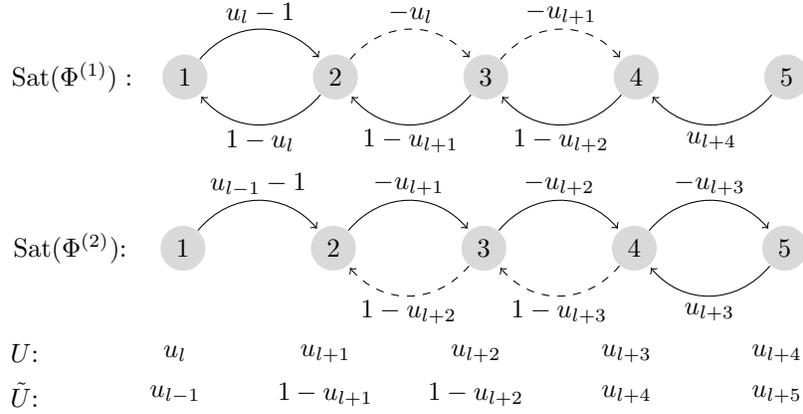
\begin{figure}
\centering
\begin{tikzpicture}[shorten >=1pt,->]
  \tikzstyle{vertex}=[circle,fill=black!15,minimum size=17pt,inner sep=1pt]
\foreach \name/\x in {1/3, 2/5, 3/7, 4/9, 5/11}
    \node[vertex] (\name) at (\x,0) {$\name$};
\draw[xshift=1.5cm] node{$\Sat(\Phi^{(1)}):$};

\draw (1) .. controls +(50:1cm) and +(130:1cm) .. (2);
\begin{scope}[dashed]
\draw (2) .. controls +(50:1cm) and +(130:1cm) .. (3);
\draw (3) .. controls +(50:1cm) and +(130:1cm) .. (4);
\end{scope}

\draw[xshift=4cm,yshift=0.85cm] node{$u_l-1$};
\draw[xshift=6cm,yshift=0.85cm] node{$-u_l$};
\draw[xshift=8cm,yshift=0.85cm] node{$-u_{l+1}$};


\draw (5) .. controls +(-130:1cm) and +(-50:1cm) .. (4);
\draw (4) .. controls +(-130:1cm) and +(-50:1cm) .. (3);
\draw (3) .. controls +(-130:1cm) and +(-50:1cm) .. (2);
\draw (2) .. controls +(-130:1cm) and +(-50:1cm) .. (1);

\draw[xshift=4cm,yshift=-0.85cm] node{$1-u_l$};
\draw[xshift=6cm,yshift=-0.85cm] node{$1-u_{l+1}$};
\draw[xshift=8cm,yshift=-0.85cm] node{$1-u_{l+2}$};
\draw[xshift=10cm,yshift=-0.85cm] node{$u_{l+4}$};

\end{tikzpicture}

\begin{tikzpicture}[shorten >=1pt,->]
  \tikzstyle{vertex}=[circle,fill=black!15,minimum size=17pt,inner sep=1pt]
\foreach \name/\x in { 1/3, 2/5, 3/7, 4/9, 5/11}
    \node[vertex] (\name) at (\x,0) {$\name$};
\draw[xshift=1.5cm] node{$\Sat(\Phi^{(2)})$:};

\draw (1) .. controls +(50:1cm) and +(130:1cm) .. (2);
\draw (2) .. controls +(50:1cm) and +(130:1cm) .. (3);
\draw (3) .. controls +(50:1cm) and +(130:1cm) .. (4);
\draw (4) .. controls +(50:1cm) and +(130:1cm) .. (5);

\draw[xshift=4cm,yshift=0.85cm] node{$u_{l-1}-1$};
\draw[xshift=6cm,yshift=0.85cm] node{$-u_{l+1}$};
\draw[xshift=8cm,yshift=0.85cm] node{$-u_{l+2}$};
\draw[xshift=10cm,yshift=0.85cm] node{$-u_{l+3}$};

\draw (5) .. controls +(-130:1cm) and +(-50:1cm) .. (4);
\begin{scope}[dashed]
\draw (4) .. controls +(-130:1cm) and +(-50:1cm) .. (3);
\draw (3) .. controls +(-130:1cm) and +(-50:1cm) .. (2);
\end{scope}

\draw[xshift=6cm,yshift=-0.85cm] node{$1-u_{l+2}$};
\draw[xshift=8cm,yshift=-0.85cm] node{$1-u_{l+3}$};

\draw[xshift=10cm,yshift=-0.85cm] node{$u_{l+3}$};

\end{tikzpicture}

\begin{tikzpicture}[shorten >=1pt,->]

\draw[xshift=1.5cm] node{$U$:};

\draw[xshift=3.5cm,yshift=0cm] node{$u_{l}$};
\draw[xshift=5.5cm,yshift=0cm] node{$u_{l+1}$};
\draw[xshift=7.5cm,yshift=0cm] node{$u_{l+2}$};
\draw[xshift=9.5cm,yshift=0cm] node{$u_{l+3}$};
\draw[xshift=11.5cm,yshift=0cm] node{$u_{l+4}$};

\end{tikzpicture}

\begin{tikzpicture}[shorten >=1pt,->]

\draw[xshift=1.5cm] node{$\Tilde{U}$:};

\draw[xshift=3.5cm,yshift=0cm] node{$u_{l-1}$};
\draw[xshift=5.5cm,yshift=0cm] node{$1-u_{l+1}$};
\draw[xshift=7.5cm,yshift=0cm] node{$1-u_{l+2}$};
\draw[xshift=9.5cm,yshift=0cm] node{$u_{l+4}$};
\draw[xshift=11.5cm,yshift=0cm] node{$u_{l+5}$};

\end{tikzpicture}
\caption{\label{f:undr-C2}
Undressing in (C2). The dashed edges indicate that they belong to saturation graphs if and only if $u_i+u_{i+1}=1$ for
the corresponding $i$, that is, if the two-cycle containing them is critical.}
\end{figure}

\section{Adding constraints}

In this section we verify whether a fundamental pair 
$\Phi^{(1)}$, $\Phi^{(2)}$ also satisfies
the last two equations of~\eqref{e:willox}, which we rewrite as 
\begin{equation}
\label{backward}
\Phi_l^{(1)} =\max(\Phi_{l+1}^{(2)},\Phi^{(1)}_{l+1}+u_l-1),\\
\end{equation}

\begin{equation}
\label{forward}
\Phi_{l+1}^{(2)} =\max(\Phi_l^{(1)}-\mys,\Phi_l^{(2)}+u_l+k-1).
\end{equation}
The parameter $\mys$, equal to $\omega-k$ in~\eqref{e:willox}, will be specified later.
Let us remark so far, that since $\Phi_l^{(1)}=\Phi_l^{(2)}=0$ at all large enough $l$,
using~\eqref{forward} we obtain $\mys\geq 0$.

\subsection{Case (C1)}

\begin{proposition}
\label{p:C1}
In case (C1), any fundamental eigenpair satisfies~\eqref{backward} and~\eqref{forward}.
\end{proposition}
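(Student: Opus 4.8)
The plan is to observe that in case (C1) the two fundamental eigenvectors attached to a soliton differ by a one-step index shift, and that after this shift both \eqref{backward} and \eqref{forward} collapse to two inequalities that are already contained in the eigenvector equation for $\Phi^{(1)}$.

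First I would fix the bound-state normalisation, $\Phi^{(1)}_i=\Phi^{(2)}_i=0$ for large $i$. The structural point is that in case (C1) the matrices $A(\gamma)$ and $A(\delta)$ are conjugate by the unit shift: reading off Lemma~\ref{coeffs-s1} one gets $A(\delta)_{p,q}=A(\gamma)_{p-1,q-1}$ for all $p,q$, hence $A(\delta)^*_{p,q}=A(\gamma)^*_{p-1,q-1}$. Consequently the critical component of $A(\delta)$ attached to a soliton is the one of $A(\gamma)$ shifted by one node, and the associated fundamental eigenvectors are the corresponding shifts of one another; with the above normalisation,
\[
\Phi^{(2)}_i=\Phi^{(1)}_{i-1}\qquad\text{for all }i .
\]
(Alternatively, this identity follows from Proposition~\ref{phirel-s1} by checking that $\Phi^{(2)}_i-\Phi^{(1)}_{i-1}$ has zero one-step increment on each relevant range of indices, using $u_m+u_{m+1}=v_{\sup}$ inside the soliton, and vanishes at $+\infty$.) Next I would determine $\mys$: evaluating \eqref{forward} at a large index, where each $\Phi$ is $0$ and $u_i=0$, gives $0=\max(-\mys,k-1)$; since $k=v_{\sup}\le 1$ in case (C1) by Proposition~\ref{p:sat-prop}, this forces $\mys=0$ (and $\mys=0$ is the only possibility when $v_{\sup}<1$).

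Substituting $\Phi^{(2)}_{i+1}=\Phi^{(1)}_i$, $\Phi^{(2)}_i=\Phi^{(1)}_{i-1}$ and $\mys=0$, equations \eqref{backward} and \eqref{forward} become, at an arbitrary index $i$,
\[
\Phi^{(1)}_i=\max\bigl(\Phi^{(1)}_i,\ \Phi^{(1)}_{i+1}+u_i-1\bigr),\qquad
\Phi^{(1)}_i=\max\bigl(\Phi^{(1)}_i,\ \Phi^{(1)}_{i-1}+u_i+k-1\bigr).
\]
In each the first argument of the maximum already equals the left-hand side, so it remains only to verify $\Phi^{(1)}_i-\Phi^{(1)}_{i+1}\ge u_i-1$ and $\Phi^{(1)}_i-\Phi^{(1)}_{i-1}\ge u_i+k-1$ for every $i$. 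Both follow at once from $\Phi^{(1)}$ being an eigenvector: in case (C1) one has $\gamma_i=u_i$ since $u_{i-1}+u_i\le v_{\sup}\le 1$, so the eigenvector equation reads $\max(\Phi^{(1)}_{i+1}+u_i-k,\ \Phi^{(1)}_{i-1}+u_i)=\Phi^{(1)}_i$, whence $\Phi^{(1)}_i-\Phi^{(1)}_{i+1}\ge u_i-k\ge u_i-1$ and $\Phi^{(1)}_i-\Phi^{(1)}_{i-1}\ge u_i\ge u_i+k-1$, using $k\le 1$ in both steps. No case distinction on the position of $i$ relative to the soliton is needed.

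The only delicate step is the shift identity $\Phi^{(2)}_i=\Phi^{(1)}_{i-1}$ — that is, choosing the correct relative normalisation of the pair and matching the right critical components; the conjugation $A(\delta)_{p,q}=A(\gamma)_{p-1,q-1}$ makes this transparent and also handles short solitons uniformly, where the endpoint bookkeeping of Proposition~\ref{phirel-s1} would otherwise need separate attention. Everything after that is one substitution plus the two inequalities just displayed.
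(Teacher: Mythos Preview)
Your proof is correct and follows essentially the same route as the paper: both establish the shift identity $\Phi^{(2)}_{i+1}=\Phi^{(1)}_i$ from the observation that $A(\delta)$ is $A(\gamma)$ shifted by one index, set $\mys=0$, and then verify the two residual inequalities. Your verification step is marginally cleaner than the paper's, since you read the inequalities $\Phi^{(1)}_i-\Phi^{(1)}_{i+1}\ge u_i-k$ and $\Phi^{(1)}_i-\Phi^{(1)}_{i-1}\ge u_i$ directly off the eigenvector equation rather than invoking the explicit case-by-case formulas of Proposition~\ref{phirel-s1}, which lets you avoid splitting on the position of $i$ relative to the soliton.
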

\begin{proof}
In the case $(C1)$, the graph $\digr(\delta)$ is the same as the graph $\digr(\gamma)$ shifted
one position to the right, and the same is true about the graphs $\Sat(\Phi^{(1)})$ and $\Sat(\Phi^{(2)})$ for any
fundamental eigenpair $\Phi^{(1)},\Phi^{(2)}$. Then $\Phi^{(1)}_{i+1}-\Phi^{(1)}_i=\Phi^{(2)}_{i+2}-\Phi^{(2)}_{i+1}$ 
for all $i$ and hence $\Phi_i^{(1)}=\Phi_{i+1}^{(2)}$ for all $i$. We put $\mys=0$ and verify the
remaining inequalities 
\begin{equation}
\begin{split}
&\Phi_i^{(1)}\geq\Phi^{(1)}_{i+1}+u_i-1\\
&\Phi_{i+1}^{(2)}\geq \Phi_i^{(2)}+u_i+k-1
\end{split}
\end{equation}
comparing them with~\eqref{e:phirel-s1}. The verification follows from $k\leq 1$ and $u_i+u_{i+1}\leq 1$. 
\end{proof}

Thus in the case (C1) any fundamental eigenpair satisfies~\eqref{e:willox}.
Also note that due to max-plus linearity, any max-plus combination of fundamental eigenpairs
$(\Phi^{(1)},\Phi^{(2)})$ is again a solution of the system, so that in general the solution space
is highly degenerated. Note that this result justifies the study of undressing by means of fundamental pairs in
the case (C1). 

\subsection{Case (C2), one soliton}

Here we verify that in the case when there is just one soliton in case (C2),
$(l,\ldots,l+s)$, the fundamental pair satisfies~\eqref{backward} and~\eqref{forward}.

In the following table, we consider an example where the potential (i.e., solution of udKdV) consists of
one soliton $(\pi_1\; \pi_2\;\pi_3\;\pi_4)$, where the real numbers $\pi_1,\pi_2,\pi_3,\pi_4<1$ are 
real numbers such that $\pi_1+\pi_2>1,$ $\pi_2+\pi_3>1$ and $\pi_3+\pi_4>1$.

\begin{equation*}
\begin{array}{cccccccc}
l:	 & 0 & 1 & 2 & 3 & 4 & 5 & 6\\      	
u_l:       & 0 & \pi_1 & \pi_2 & \pi_3 & \pi_4 & 0 & 0\\
\gamma_l=\min(u_l,1-u_{l-1}):  & 0 & \pi_1 & 1-\pi_1  &  1-\pi_2 & 1-\pi_3 & 0     & 0\\
\delta_l=\min(u_{l-1},1-u_l):  & 0 & 0    & 1-\pi_2  & 1-\pi_3  & 1-\pi_4 & \pi_4 & 0   
\end{array}
\end{equation*}

The digraphs $\digr(\gamma)$ and $\digr(\delta)$ are displayed on Figure~\ref{f:onesoliton}.
\begin{figure}
\centering
\begin{tikzpicture}[shorten >=1pt,->, scale=0.8]
  \tikzstyle{vertex1}=[circle,fill=black!15,minimum size=17pt,inner sep=1pt]
\tikzstyle{vertex2}=[circle,fill=black!30,minimum size=17pt,inner sep=1pt]

\foreach \name/\x in {0/1, 3/7, 4/9, 5/11, 6/13}
    \node[vertex1] (\name) at (\x,0) {$\name$};
\foreach \name/\x in {1/3, 2/5}
    \node[vertex2] (\name) at (\x,0) {$\name$};

\foreach \name/\nickname\x in {0'/0/1, 1'/1/3, 2'/2/5, 3'/3/7, 6'/6/13}
    \node[vertex1] (\name) at (\x,-3) {$\nickname$};
\foreach \name/\nickname/\x in {4'/4/9, 5'/5/11}
    \node[vertex2] (\name) at (\x,-3) {$\nickname$};

\draw[xshift=1cm, yshift=-5cm] node{$0$:};
\draw[xshift=3cm,yshift=-5cm] node{$\pi_1$};
\draw[xshift=5cm,yshift=-5cm] node{$\pi_2$};
\draw[xshift=7cm,yshift=-5cm] node{$\pi_3$};
\draw[xshift=9cm,yshift=-5cm] node{$\pi_4$};
\draw[xshift=11cm,yshift=-5cm] node{$0$};
\draw[xshift=13cm,yshift=-5cm] node{$0$};




\draw (0) .. controls +(50:1cm) and +(130:1cm) .. (1);
\draw (1) .. controls +(50:1cm) and +(130:1cm) .. (2);
\begin{scope}[dashed]
\draw (2) .. controls +(50:1cm) and +(130:1cm) .. (3);
\draw (3) .. controls +(50:1cm) and +(130:1cm) .. (4);
\draw (4) .. controls +(50:1cm) and +(130:1cm) .. (5);
\draw (5) .. controls +(50:1cm) and +(130:1cm) .. (6);
\end{scope}

\draw[xshift=2cm,yshift=0.85cm] node{$-1$};
\draw[xshift=4cm,yshift=0.85cm] node{$\pi_1-1$};
\draw[xshift=6cm,yshift=0.85cm] node{$-\pi_1$};
\draw[xshift=8cm,yshift=0.85cm] node{$-\pi_2$};
\draw[xshift=10cm,yshift=0.85cm] node{$-\pi_3$};
\draw[xshift=12cm,yshift=0.85cm] node{$-1$};

\draw (6) .. controls +(-130:1cm) and +(-50:1cm) .. (5);
\draw (5) .. controls +(-130:1cm) and +(-50:1cm) .. (4);
\draw (4) .. controls +(-130:1cm) and +(-50:1cm) .. (3);
\draw (3) .. controls +(-130:1cm) and +(-50:1cm) .. (2);
\draw (2) .. controls +(-130:1cm) and +(-50:1cm) .. (1);
\begin{scope}[dashed]
\draw (1) .. controls +(-130:1cm) and +(-50:1cm) .. (0);
\end{scope}

\draw[xshift=2cm,yshift=-0.85cm] node{$\pi_1$};
\draw[xshift=4cm,yshift=-0.85cm] node{$1-\pi_1$};
\draw[xshift=6cm,yshift=-0.85cm] node{$1-\pi_2$};
\draw[xshift=8cm,yshift=-0.85cm] node{$1-\pi_3$};
\draw[xshift=10cm,yshift=-0.85cm] node{$0$};
\draw[xshift=12cm,yshift=-0.85cm] node{$0$};



\draw (0') .. controls +(50:1cm) and +(130:1cm) .. (1');
\draw (1') .. controls +(50:1cm) and +(130:1cm) .. (2');
\draw (2') .. controls +(50:1cm) and +(130:1cm) .. (3');
\draw (3') .. controls +(50:1cm) and +(130:1cm) .. (4');
\draw (4') .. controls +(50:1cm) and +(130:1cm) .. (5');
\begin{scope}[dashed]
\draw (5') .. controls +(50:1cm) and +(130:1cm) .. (6');
\end{scope}

\draw[xshift=2cm,yshift=-2.15cm] node{$-1$};
\draw[xshift=4cm,yshift=-2.15cm] node{$-1$};
\draw[xshift=6cm,yshift=-2.15cm] node{$-\pi_2$};
\draw[xshift=8cm,yshift=-2.15cm] node{$-\pi_3$};
\draw[xshift=10cm,yshift=-2.15cm] node{$-\pi_4$};
\draw[xshift=12cm,yshift=-2.15cm] node{$\pi_4-1$};

\draw (6') .. controls +(-130:1cm) and +(-50:1cm) .. (5');
\draw (5') .. controls +(-130:1cm) and +(-50:1cm) .. (4');
\begin{scope}[dashed]
\draw (4') .. controls +(-130:1cm) and +(-50:1cm) .. (3');
\draw (3') .. controls +(-130:1cm) and +(-50:1cm) .. (2');
\draw (2') .. controls +(-130:1cm) and +(-50:1cm) .. (1');
\draw (1') .. controls +(-130:1cm) and +(-50:1cm) .. (0');
\end{scope}

\draw[xshift=2cm,yshift=-3.85cm] node{$0$};
\draw[xshift=4cm,yshift=-3.85cm] node{$1-\pi_2$};
\draw[xshift=6cm,yshift=-3.85cm] node{$1-\pi_3$};
\draw[xshift=8cm,yshift=-3.85cm] node{$1-\pi_4$};
\draw[xshift=10cm,yshift=-3.85cm] node{$\pi_4$};
\draw[xshift=12cm,yshift=-3.85cm] node{$0$};

\draw[dashed,-] (1) -- (2');
\draw[dashed,-] (4) -- (5');

\draw[xshift=7cm,yshift=-1.5cm] node{soliton area};
\draw[xshift=1cm,yshift=-1.5cm] node{before soliton};
\draw[xshift=12cm,yshift=-1.5cm] node{after soliton};
\end{tikzpicture}
\caption{\label{f:onesoliton}The case of one soliton}
\end{figure}
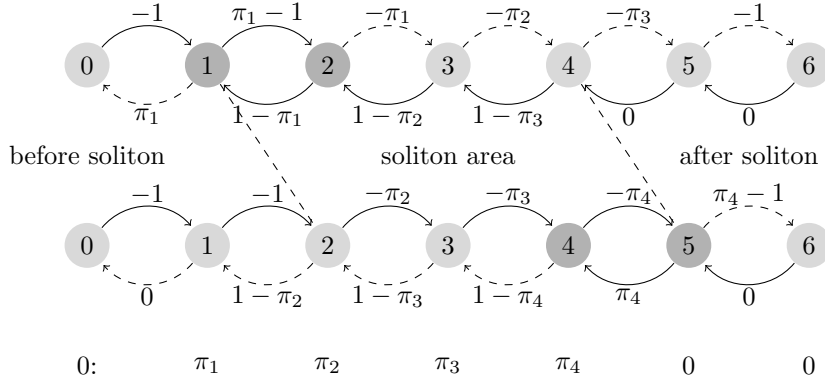

The saturation graph $\Sat(\Phi^{(1)})$ is a subgraph of $\digr(\gamma)$ shown in the upper part of the
picture: the edges not belonging to it are drawn as dashed.  Similarly, the saturation graph $\Sat(\Phi^{(2)})$ 
is a subgraph of $\digr(\delta)$ shown in the upper part of the picture. Combining these graphs, we see that
we have to analyse three cases: 1) before the soliton area (to the left), 2) in the soliton area, 3) after the
soliton area (to the right), with three different combinations of edges used by the fundamental pair.

To formalize the arguments let us introduce the notion of soliton area in general. 
Let $(l,\ldots,l+s)$ be a soliton. By the soliton area we mean a pair of
subgraphs of $\digr(\gamma)$ and $\digr(\delta)$: 1) the subgraph of 
$\digr(\gamma)$ extracted from the nodes $l,\ldots,l+s$, 
2) the subgraph of $\digr(\delta)$ extracted from the nodes $l+1,\ldots,l+s+1$.

\begin{theorem}
If $U$ contains just one soliton $(l,\ldots,l+s)$, then
the pair of fundamental eigenvectors associated with it 
satisfies~\eqref{backward} and~\eqref{forward}.
\end{theorem}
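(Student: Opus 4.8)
The plan is to verify the two scalar equations~\eqref{backward} and~\eqref{forward} index-by-index, splitting the line of indices into the three regions suggested by Figure~\ref{f:onesoliton}: \emph{before} the soliton ($l'\le l$ in the relevant indexing), \emph{inside} the soliton area ($l<l'<l+s$), and \emph{after} the soliton ($l'\ge l+s$). The key input is Proposition~\ref{phirel-s2}, which gives the differences of neighbouring coordinates of $\Phi^{(1)}$ and $\Phi^{(2)}$ in closed form as soon as one knows which edges lie in $\Sat(\Phi^{(1)})$ and $\Sat(\Phi^{(2)})$; this is exactly the data encoded in the dashed vs.\ solid edges of Figure~\ref{f:onesoliton}. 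So the first step is to record, in each of the three regions, the precise relations between $\Phi^{(1)}_{l'}$, $\Phi^{(1)}_{l'+1}$, $\Phi^{(2)}_{l'}$, $\Phi^{(2)}_{l'+1}$ coming from~\eqref{e:phirel-s2}, together with the fact (from Theorem~\ref{t:undress} and the assumption of a single soliton) that $k=1$ here.

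Next I would fix the value of $\mys$. Since outside a bounded window all $u_i=0$ and both fundamental eigenvectors are eventually constant (equal, say, after normalization), the ``after'' region forces a specific value; I expect $\mys=0$ just as in case (C1), and I would check that this choice is consistent in all three regions. With $\mys$ pinned down, equations~\eqref{backward} and~\eqref{forward} each become, at a given index, an identity of the form $\Phi^{(1)}_{l'} = \max(\text{two expressions in the }\Phi\text{'s and }u\text{'s})$, and using the difference relations from Step~1 this reduces to an elementary inequality/equality between the $u_i$'s. The ``before'' and ``after'' regions will be essentially the (C1)-type computation already done in Proposition~\ref{p:C1} (there the graph $\digr(\delta)$ is a shift of $\digr(\gamma)$ and $\Phi^{(1)}_i=\Phi^{(2)}_{i+1}$; a similar alignment holds here outside the soliton because the saturation graphs coincide with the tails described in Proposition~\ref{p:sat-prop} part~1). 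The inequalities to be checked there are $u_i+u_{i+1}<1$ and $k\le 1$, both of which hold outside the soliton.

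The main obstacle is the \emph{inside-the-soliton} region. There $u_i+u_{i+1}\ge 1$, so the ``second branch'' of every case in Proposition~\ref{phirel-s2} is active, and the alignment $\Phi^{(1)}_i=\Phi^{(2)}_{i+1}$ that trivialized case (C1) no longer holds; instead one gets relations like $\Phi^{(1)}_i-\Phi^{(1)}_{i+1}=-u_{i-1}$ and $\Phi^{(2)}_{i+1}-\Phi^{(2)}_i=1-u_{i+1}$, and one must check that with these, the maximum on the right-hand side of~\eqref{backward} (resp.~\eqref{forward}) is attained and equals the left-hand side. Concretely, for~\eqref{backward} inside the soliton I expect the term $\Phi^{(1)}_{l'+1}+u_{l'}-1$ to be the active one (since $u_{l'}+u_{l'-1}\ge1$ makes $\Phi^{(1)}_{l'}-\Phi^{(1)}_{l'+1}$ large), and for~\eqref{forward} with $k=1$ the term $\Phi^{(2)}_{l'}+u_{l'}+k-1=\Phi^{(2)}_{l'}+u_{l'}$ to be active; one then verifies the complementary inequality $\Phi^{(1)}_{l'}\ge\Phi^{(2)}_{l'+1}$, resp.\ $\Phi^{(1)}_{l'}-\mys\le\Phi^{(2)}_{l'+1}$, by telescoping the difference relations from the soliton's tail (where both $\Phi$'s start from a common critical two-cycle) up to index $l'$. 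The bookkeeping of these telescoped sums over the soliton, using $u_i+u_{i+1}\ge1$ repeatedly, is where the real work lies; everything else is a routine substitution.
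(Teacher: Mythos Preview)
Your overall architecture matches the paper exactly: split into the three regions (before/inside/after the soliton), use the difference relations from Proposition~\ref{phirel-s2} with $k=1$, and telescope inside the soliton. However, there is a genuine gap in your choice of $\mys$.

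You write that the ``after'' region forces $\mys=0$ as in case~(C1). It does not. After the soliton one has $\Phi^{(1)}_i=\Phi^{(2)}_{i+1}$ and $\Phi^{(2)}_{i+1}=\Phi^{(2)}_i+u_i$, so in~\eqref{forward} the second term already attains the maximum; the first term $\Phi^{(1)}_i-\mys=\Phi^{(2)}_{i+1}-\mys$ only needs $\mys\ge 0$, which is no constraint. The actual value of $\mys$ is dictated by the \emph{before} region. The telescoping you describe inside the soliton shows (using $\Phi^{(1)}_{i+1}-\Phi^{(1)}_i=1-u_i$ and $\Phi^{(2)}_i-\Phi^{(2)}_{i+1}=-u_i$, not the formulas you quoted) that
\[
\Phi^{(1)}_{i-1}-\Phi^{(2)}_i=\bigl(\Phi^{(1)}_i-\Phi^{(2)}_{i+1}\bigr)+(u_i+u_{i-1}-1),
\]
so the difference $\Phi^{(1)}_i-\Phi^{(2)}_{i+1}$ grows from $0$ (after the soliton) to $\sum_{i=l}^{l+s-1}(u_i+u_{i+1}-1)$ (before the soliton), and stays constant thereafter. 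This accumulated quantity is exactly $\mys$; with it,~\eqref{forward} holds in the ``before'' region via $\Phi^{(2)}_{i+1}=\Phi^{(1)}_i-\mys$. With your choice $\mys=0$, the equation~\eqref{forward} would fail before the soliton whenever the soliton is not degenerate (some $u_i+u_{i+1}>1$), because then $\Phi^{(1)}_i>\Phi^{(2)}_{i+1}$ there and the right-hand side of~\eqref{forward} would exceed the left-hand side. Once you replace $\mys=0$ by $\mys=\sum_{i=l}^{l+s-1}(u_i+u_{i+1}-1)$, the rest of your plan goes through as written.
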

\begin{proof}
We are going to use relations between the neighbouring components of $\Phi^{(1)}$ and $\Phi^{(2)}$, written above 
in~\eqref{e:phirel-s2}. 

For the area after the soliton we obtain
\begin{equation}
\label{after-soliton}
\begin{split}
& \Phi_{i+1}^{(1)}-\Phi_i^{(1)}=u_{i+1},\\ 
& \Phi_{i+1}^{(2)}-\Phi_i^{(2)}=u_i, 
\end{split}
\end{equation}
and in particular $\Phi_{i+1}^{(1)}-\Phi_i^{(1)}=\Phi_{i+2}^{(2)}-\Phi_{i+1}^{(2)}$ for all $i\geq l+s$.
This implies $\Phi_i^{(1)}=\Phi_{i+1}^{(2)}$ for the area after the soliton. Equation~\eqref{forward} follows
from $\Phi_{i+1}^{(2)}\geq \Phi_i^{(1)}-\mys$, and since $\Phi_{i+1}^{(2)}=\Phi_i^{(2)}+u_i$ by the
second equation of~\eqref{after-soliton}. We also obtain
\begin{equation}
\Phi_i^{(1)}-\Phi_{i+1}^{(1)}\geq u_i-1
\end{equation}
from the first equation of~\eqref{after-soliton}, 
since $u_i+u_{i+1}\leq 1$ for $i\geq l+s$, which together with $\Phi_i^{(1)}=\Phi_{i+1}^{(2)}$ 
makes~\eqref{backward}. 

For the soliton area we obtain
\begin{equation}
\label{soliton-area}
\begin{split}
& \Phi_{i+1}^{(1)}-\Phi_i^{(1)}=1-u_i,\\ 
& \Phi_i^{(2)}-\Phi_{i+1}^{(2)}=-u_i, 
\end{split}
\end{equation}
and in particular $\Phi_i^{(1)}-\Phi_{i+1}^{(1)}\geq\Phi_{i+1}^{(2)}-\Phi_{i+2}^{(2)}$.
Equation~\eqref{soliton-area} implies that
\begin{equation}
\label{e:difference}
\Phi_{i-1}^{(1)}-\Phi_i^{(2)}=\Phi_i^{(1)}-\Phi_{i+1}^{(2)}+(u_i+u_{i-1}-1).
\end{equation}
Defining
\begin{equation}
\label{def:mys}
\mys=\sum_{i=l}^{l+s-1} (u_i+u_{i+1}-1)
\end{equation}
we obtain that
\begin{equation}
\Phi_i^{(1)}-\mys\leq\Phi_{i+1}^{(2)}\leq\Phi_i^{(1)}
\end{equation}
To show~\eqref{forward} and~\eqref{backward} we observe that~\eqref{soliton-area} furnish the remaining
necessary equalities $\Phi_{i+1}^{(2)}=\Phi_i^{(2)}+u_i$ and $\Phi_i^{(1)}=\Phi_{i+1}^{(1)}+u_i-1$.
 
Before the soliton area we obtain 
\begin{equation}
\label{before-soliton}
\begin{split}
& \Phi_{i}^{(1)}-\Phi_{i+1}^{(1)}=u_i-1,\\ 
& \Phi_{i}^{(2)}-\Phi_{i+1}^{(2)}=u_{i-1}-1, 
\end{split}
\end{equation}
Here the difference $\Phi_i^{(1)}-\Phi_{i+1}^{(2)}$ is stable, and by~\eqref{e:difference}
and~\eqref{def:mys} it equals to $\mys$, so $\Phi_{i+1}^{(2)}=\Phi_i^{(1)}-\mys$ for all $i\leq l$.
Equation~\eqref{backward} follows
from $\Phi_i^{(1)}\geq\Phi_{i+1}^{(2)}$, and since $\Phi_i^{(1)}=\Phi_{i+1}^{(1)}+u_i-1$ by the
first equation of~\eqref{before-soliton}. We also obtain
\begin{equation}
\Phi_{i+1}^{(2)}\geq \Phi_i^{(2)}+u_i
\end{equation}
from the second equation of~\eqref{before-soliton}, 
since $u_i+u_{i-1}\leq 1$ for $i\leq l$, which together with $\Phi_{i+1}^{(2)}=\Phi_i^{(1)}-\mys$ 
makes~\eqref{backward}. 
\end{proof}

This result implies that in the case of one massive soliton, when $U$ satisfies (A$U$), 
a solution $\Phi$ to~\eqref{e:willox} satisfying (A$\Phi$) exists and is {\bf unique}.

\subsection{Case (C2), several solitons}

We have seen above that in the case of one soliton, the last two equations of~\eqref{e:willox} are satisfied
automatically. However, the graphs $\digr(\gamma)$ and $\digr(\delta)$ contain edges which are dangerous
to use. If $\Sat(\Phi^{(1)})$ or $\Sat(\Phi^{(2)})$ contain such edges then the last two equations 
of~\eqref{e:willox} are violated.

\begin{lemma}
\label{l:danger}
Let $\Phi^{(1)}$ and $\Phi^{(2)}$ be a solution to~\eqref{e:willox}. Then $\Sat(\Phi^{(1)})$ cannot
contain edges $(i,i+1)$ if $u_i+u_{i-1}>1$, and $\Sat(\Phi^{(2)})$ cannot contain edges $(i+1,i)$ if
$u_i+u_{i+1}>1$. 
\end{lemma}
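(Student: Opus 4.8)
\emph{Proof plan.} The plan is to derive a contradiction directly from the two constraint equations \eqref{backward} and \eqref{forward}, using the explicit coefficient formulas of Lemma~\ref{coeffs-s2} together with the fact that $k=1$ in the relevant regime. First I would observe that the statement is vacuous in case (C1), since there $u_j+u_{j-1}\le v_{\sup}\le 1$ for every $j$; so one may assume we are in case (C2) and that $u_i+u_{i-1}>1$ (resp.\ $u_i+u_{i+1}>1$) for the index at hand. This in particular gives $\max_j(u_{j-1}+u_j)>1$, hence $k=\min(\max_j(u_{j-1}+u_j),1)=1$ by the formula \eqref{kformula} for $k$ (Proposition~\ref{p:sat-prop}).

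Next I would argue by contradiction. Suppose $(i,i+1)\in\Sat(\Phi^{(1)})$ while $u_i+u_{i-1}>1$. By the definition \eqref{satphi-def} of $\Sat$ and the $u_i+u_{i-1}\ge1$ branch of the expression for $A(\gamma)_{i,i+1}$ in \eqref{e:coeffs-s2}, this reads
\[
\Phi^{(1)}_i-\Phi^{(1)}_{i+1}=A(\gamma)_{i,i+1}=1-u_{i-1}-k=-u_{i-1}.
\]
On the other hand, setting $l=i$ in \eqref{backward} yields the inequality $\Phi^{(1)}_i\ge\Phi^{(1)}_{i+1}+u_i-1$. Combining the equality with the inequality gives $-u_{i-1}\ge u_i-1$, i.e.\ $u_i+u_{i-1}\le1$, contradicting the hypothesis. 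The second assertion would follow symmetrically: if $(i+1,i)\in\Sat(\Phi^{(2)})$ with $u_i+u_{i+1}>1$, the $u_i+u_{i+1}\ge1$ branch of the expression for $A(\delta)_{i+1,i}$ in \eqref{e:coeffs-s2} gives $\Phi^{(2)}_{i+1}-\Phi^{(2)}_i=1-u_{i+1}$, whereas $l=i$ in \eqref{forward} gives $\Phi^{(2)}_{i+1}\ge\Phi^{(2)}_i+u_i+k-1=\Phi^{(2)}_i+u_i$; together these force $u_i+u_{i+1}\le1$, again a contradiction.

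I do not expect a genuine obstacle here: each of the two cases reduces to a one-line comparison, and the only point that requires a little care is to pin down $k=1$ at the outset — this is exactly what collapses the entries $1-u_{i-1}-k$ and $1-u_{i+1}-k$ of \eqref{e:coeffs-s2} to $-u_{i-1}$ and $1-u_{i+1}$ — and then to keep the signs and the indices straight when reading the relevant coefficients off Lemma~\ref{coeffs-s2}.
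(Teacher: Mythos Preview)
Your proposal is correct and follows essentially the same route as the paper's proof: invoke the (C2) branch of Lemma~\ref{coeffs-s2} to read off the saturated edge weight, compare with the corresponding inequality from \eqref{backward} or \eqref{forward}, and obtain $u_i+u_{i-1}\le 1$ (resp.\ $u_i+u_{i+1}\le 1$), a contradiction. Your version is in fact slightly more explicit than the paper's, which silently uses $k=1$ when writing $A(\gamma)_{i,i+1}=-u_{i-1}$ and $\Phi^{(2)}_{i+1}-\Phi^{(2)}_i\ge u_i$, whereas you justify this via \eqref{kformula} and also note the vacuousness in case (C1).
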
 
\begin{proof}
We use Lemma~\ref{coeffs-s2} being in the case of $(C2)$. 

If $\Sat(\Phi^{(1)})$ uses $(i,i+1)$ when $u_i+u_{i-1}>1$ then $\Phi_i^{(1)}-\Phi_{i+1}^{(1)}=-u_{i-1}$.
By~\eqref{backward} we should have $\Phi_i^{(1)}-\Phi_{i+1}^{(1)}\geq u_i-1$ and hence $u_i+u_{i-1}\leq 1$,
a contradiction.

If $\Sat(\Phi^{(2)})$ uses $(i+1,i)$ when $u_i+u_{i+1}>1$ then $\Phi_{i+1}^{(2)}-\Phi_i^{(2)}=1-u_{i+1}$.
By~\eqref{forward} we should have $\Phi_{i+1}^{(2)}-\Phi_i^{(2)}\geq u_i$ and hence $u_i+u_{i+1}\leq 1$,
a contradiction.
\end{proof}

It can be checked that the use of other edges does not lead to such contradictions, and also, using Lemma~\ref{coeffs-s1},
that there are no contradictions in the case of $(C1)$.

The following negative result is now easy to see.

\begin{theorem}
Let $U$ satisfy $(C2)$ and contain more than one soliton. Then no pair of fundamental eigenvectors associated with
a soliton can be a solution of~\eqref{e:willox}.
\end{theorem}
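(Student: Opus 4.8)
The plan is to derive a contradiction from the assumption that a fundamental eigenpair $(\Phi^{(1)},\Phi^{(2)})$ associated with one particular soliton solves the full system~\eqref{e:willox}, by showing that the saturation graphs of such fundamental eigenvectors are forced to contain one of the ``dangerous'' edges forbidden by Lemma~\ref{l:danger}. The key point is that a fundamental eigenvector is a critical column of the Kleene star, so its saturation graph is not just any admissible subgraph: it is an optimal-path tree rooted at the chosen critical component, and this rigidity is what will clash with having several solitons.

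\medskip\noindent
Concretely, I would proceed as follows. Suppose $U$ has (at least) two solitons, and fix the fundamental eigenpair associated with the first one, say $(l,\ldots,l+s)$; the critical cycle selected in $\digr(\gamma)$ lies at its tail and the one in $\digr(\delta)$ at its head, as recorded in Theorem~\ref{t:first} and Figures~\ref{f:tail},~\ref{f:head}. By the principal-path interpretation of $A(\gamma)^*$ and $A(\delta)^*$, the value $\Phi^{(1)}_j$ for $j$ to the right of the chosen soliton is the greatest weight of a path from $j$ back to the critical node; as in Proposition~\ref{phirel-s1}/\ref{phirel-s2}, this forces $\Phi^{(1)}$ to use the forward edges $(i,i+1)$ all along the stretch between the first soliton and the second soliton, and in particular \emph{into the second soliton's tail}. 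But inside the tail of the second soliton we have $u_i+u_{i-1}>1$ for the relevant index $i$, so $\Sat(\Phi^{(1)})$ contains an edge $(i,i+1)$ with $u_i+u_{i-1}>1$ — exactly the situation excluded by Lemma~\ref{l:danger}. (Symmetrically, if the chosen soliton is the rightmost, one runs the same argument with $\Phi^{(2)}$ and the edges $(i+1,i)$ reaching the head of a soliton lying to its left, again contradicting Lemma~\ref{l:danger}.) Either way we get a contradiction, so no fundamental eigenpair can solve~\eqref{e:willox}.

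\medskip\noindent
To organize this cleanly I would first record a small lemma describing $\Sat(\Phi^{(1)})$ and $\Sat(\Phi^{(2)})$ globally for a fundamental eigenpair: away from its own soliton, $\Phi^{(1)}$ is ``monotone toward'' the critical node, using the left-to-right edges on the left and right-to-left edges on the right, crossing \emph{every} intervening soliton region by the same edges it uses on the flat background (this is just the path-optimality argument, using that detours through a non-selected soliton's two-cycles are not strictly profitable, since those cycles have weight $\le 1-k = 0$). Then the only thing to check is that crossing a foreign soliton's tail with forward edges, or its head with backward edges, triggers the hypothesis of Lemma~\ref{l:danger}; this is immediate from the definition of soliton in case (C2) together with Lemmas~\ref{coeffs-s1},~\ref{coeffs-s2}. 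Finally one observes that with two or more solitons, for \emph{any} choice of associated soliton there is always a foreign soliton either strictly to its right (then use $\Phi^{(1)}$) or strictly to its left (then use $\Phi^{(2)}$), so the dichotomy above is exhaustive.

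\medskip\noindent
The main obstacle is the first lemma: pinning down precisely which edges $\Sat(\Phi^{(1)})$ and $\Sat(\Phi^{(2)})$ must contain in the ``gap'' between two solitons and inside a foreign soliton. One has to argue that an optimal path from a far node back to the selected critical component gains nothing by looping around a foreign critical two-cycle, and in fact is forced onto the one-directional chain of edges; the borderline subtlety is the indices where $u_i+u_{i+1}=1$, where extra critical cycles appear (the dashed edges of Figures~\ref{f:tail},~\ref{f:head},~\ref{f:undr-C2}) and the optimal path is non-unique. The clean way around this is to note that Lemma~\ref{l:danger} only needs a \emph{strict} inequality $u_i+u_{i-1}>1$ (resp. $>1$), which occurs at the extreme tail/head index of any genuine soliton, and there the relevant edge is \emph{the unique} outgoing (resp. incoming) choice that any eigenvector-supporting saturation graph must use once the path has committed to approaching the selected critical node from that side — so the non-uniqueness at the $=1$ indices never interferes with the strict-inequality index that produces the contradiction.
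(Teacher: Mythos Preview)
Your overall strategy is right and matches the paper's: combine Lemma~\ref{l:danger} with the rigid structure of the saturation graphs of fundamental eigenvectors recorded in Proposition~\ref{phirel-s2}. However, you have the roles of $\Phi^{(1)}$ and $\Phi^{(2)}$ swapped throughout, and as written the argument fails.

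Concretely, by~\eqref{e:phirel-s2-simple} the fundamental eigenvector $\Phi^{(1)}$ associated with a soliton at $(l,\ldots,l+s)$ satisfies $\Phi^{(1)}_{i+1}-\Phi^{(1)}_i=A(\gamma)_{i+1,i}$ for all $i\geq l$. Thus to the \emph{right} of the selected soliton $\Sat(\Phi^{(1)})$ uses the \emph{backward} edges $(i+1,i)$, not the forward edges $(i,i+1)$ you claim. Since Lemma~\ref{l:danger} forbids only forward edges in $\Sat(\Phi^{(1)})$, a foreign soliton to the right produces no contradiction for $\Phi^{(1)}$; in the two-soliton example of Figure~\ref{f:last} the dangerous forward edge $(6,7)$ is indeed absent from $\Sat(\Phi^{(1)})$. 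The contradiction on that side comes from $\Phi^{(2)}$: it also uses backward edges $(i+1,i)$ for $i\geq l+s-1$, and those \emph{are} forbidden in $\Sat(\Phi^{(2)})$ wherever $u_i+u_{i+1}>1$, i.e.\ inside any soliton to the right. Symmetrically, a foreign soliton to the left is detected by $\Phi^{(1)}$ (forward edges for $i\leq l$), not by $\Phi^{(2)}$. Once you interchange $\Phi^{(1)}$ and $\Phi^{(2)}$ in your dichotomy, the argument goes through and is essentially the paper's: the $\Phi^{(1)}$-constraints force the selected soliton to be leftmost, and then $\Sat(\Phi^{(2)})$ must contain a forbidden backward edge in the next soliton.

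A smaller point: your assertion that the strict inequality $u_i+u_{i-1}>1$ ``occurs at the extreme tail/head index of any genuine soliton'' is not supported by Definition~\ref{def:sol}, which only requires $\geq 1$ for interior pairs; the strict pair needed to trigger Lemma~\ref{l:danger} may sit anywhere in the foreign soliton. The paper does not discuss this borderline either, so you may simply assume, as it implicitly does, that each soliton contains at least one strictly massive pair.
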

\begin{proof}
According to Lemma~\ref{l:danger}, to each soliton there corresponds a number of consecutive forward edges
in $\digr(\gamma)$ that cannot be used by $\Sat(\Phi^{(1)})$, located immediately after the corresponding critical
cycle in $\digr(\gamma)$. Further, there is also a number of consecutive backward edges in $\digr(\delta)$ that
cannot be used by $\Sat(\Phi^{(2)})$, located before the corresponding critical cycle in $\digr(\delta)$. 

If $\Phi^{(1)}$ and $\Phi^{(2)}$ are a pair of fundamental eigenvectors, then the switch from backward to forward edges 
can happen only once. To avoid all forbidden forward edges after the critical cycle of $\digr(\gamma)$ corresponding to the 
first (i.e., left-most) soliton, $\Sat(\Phi^{(1)})$ has to use backward edges only, which implies that $\Phi^{(1)},\Phi^{(2)}$ should be the pair associated with the first soliton. But then also $\Sat(\Phi^{(2)})$ uses all backward edges after the first critical cycle of
$\digr(\delta)$, including all forbidden backward edges corresponding to the next solitons.
\end{proof}

In the following table, we consider an example where the potential (i.e., solution of udKdV) consists of
two solitons $(\pi_1\; \pi_2)$ and $(\pi_3\;\pi_4)$, and the real numbers $\pi_1,\pi_2,\pi_3,\pi_4<1$ are 
such that $\pi_1+\pi_2>1$ and $\pi_3+\pi_4>1$.

\begin{equation*}
\begin{array}{cccccccccc}
l:	                      & 0 &     1 &     2    & 3      & 4  &     5 &     6   & 7     & 8\\      	
u_l:                          & 0 & \pi_1 & \pi_2    & 0      & 0  & \pi_3 & \pi_4   & 0     & 0 \\
\gamma_l=\min(u_l,1-u_{l-1}:  & 0 & \pi_1 & 1-\pi_1  &     0  & 0  & \pi_3 & 1-\pi_3 & 0     & 0 \\
\delta_l=\min(u_{l-1},1-u_l): & 0 &  0    & 1-\pi_2  & \pi_2  & 0  & 0     & 1-\pi_4 & \pi_4 & 0  
\end{array}
\end{equation*}

Digraphs $\digr(\gamma)$ and $\digr(\delta)$ are displayed on Figure~\ref{f:last}.

\begin{figure}
\centering
\begin{tikzpicture}[shorten >=1pt,->]
  \tikzstyle{vertex1}=[circle,fill=black!15,minimum size=17pt,inner sep=1pt]
\tikzstyle{vertex2}=[circle,fill=black!30,minimum size=17pt,inner sep=1pt]

\foreach \name/\x in {0/1, 3/5.5, 4/7, 7/11.5, 8/13} \node[vertex1] (\name) at (\x,0) {$\name$};                                                                                                                                                                                                                                                                                                                                                                                                                                                                                                                                                                                                                                                                                    \foreach \name/\x in {1/2.5, 2/4, 5/8.5, 6/10}
    \node[vertex2] (\name) at (\x,0) {$\name$};

\foreach \name/\nickname\x in {0'/0/1, 1'/1/2.5, 4'/4/7, 5'/5/8.5, 8'/8/13}
    \node[vertex1] (\name) at (\x,-3) {$\nickname$};
\foreach \name/\nickname\x in {2'/2/4, 3'/3/5.5, 6'/6/10, 7'/7/11.5}
    \node[vertex2] (\name) at (\x,-3) {$\nickname$};

\draw[xshift=1cm, yshift=-5cm] node{$0$:};
\draw[xshift=2.5cm,yshift=-5cm] node{$\pi_1$};
\draw[xshift=4cm,yshift=-5cm] node{$\pi_2$};
\draw[xshift=5.5cm,yshift=-5cm] node{$0$};
\draw[xshift=7cm,yshift=-5cm] node{$0$};
\draw[xshift=8.5cm,yshift=-5cm] node{$\pi_3$};
\draw[xshift=10cm,yshift=-5cm] node{$\pi_4$};
\draw[xshift=11.5cm,yshift=-5cm] node{$0$};
\draw[xshift=13cm,yshift=-5cm] node{$0$};




\draw (0) .. controls +(50:1cm) and +(130:1cm) .. (1);
\draw (1) .. controls +(50:1cm) and +(130:1cm) .. (2);
\begin{scope}[dashed]
\begin{scope}[red]
\draw (2) .. controls +(50:1cm) and +(130:1cm) .. (3);
\end{scope}
\draw (3) .. controls +(50:1cm) and +(130:1cm) .. (4);
\draw (4) .. controls +(50:1cm) and +(130:1cm) .. (5);
\begin{scope}[red]
\draw (6) .. controls +(50:1cm) and +(130:1cm) .. (7);
\end{scope}
\draw (7) .. controls +(50:1cm) and +(130:1cm) .. (8);
\end{scope}
\draw (5) .. controls +(50:1cm) and +(130:1cm) .. (6);

\draw[xshift=1.75cm,yshift=0.9cm] node{$-1$};
\draw[xshift=3.25cm,yshift=0.9cm] node{$\pi_1-1$};
\draw[xshift=4.75cm,yshift=0.9cm] node{$-\pi_1$};
\draw[xshift=6.25cm,yshift=0.9cm] node{$-1$};
\draw[xshift=7.75cm,yshift=0.9cm] node{$-1$};
\draw[xshift=9.25cm,yshift=0.9cm] node{$\pi_3-1$};
\draw[xshift=10.75cm,yshift=0.9cm] node{$-\pi_3$};
\draw[xshift=12.25cm,yshift=0.9cm] node{$-1$};

\draw (8) .. controls +(-130:1cm) and +(-50:1cm) .. (7);
\draw (7) .. controls +(-130:1cm) and +(-50:1cm) .. (6);
\draw (6) .. controls +(-130:1cm) and +(-50:1cm) .. (5);
\draw (5) .. controls +(-130:1cm) and +(-50:1cm) .. (4);
\draw (4) .. controls +(-130:1cm) and +(-50:1cm) .. (3);
\draw (3) .. controls +(-130:1cm) and +(-50:1cm) .. (2);
\draw (2) .. controls +(-130:1cm) and +(-50:1cm) .. (1);
\begin{scope}[dashed]
\draw (1) .. controls +(-130:1cm) and +(-50:1cm) .. (0);
\end{scope}

\draw[xshift=1.75cm,yshift=-0.9cm] node{$\pi_1$};
\draw[xshift=3.25cm,yshift=-0.9cm] node{$1-\pi_1$};
\draw[xshift=4.75cm,yshift=-0.9cm] node{$0$};
\draw[xshift=6.25cm,yshift=-0.9cm] node{$0$};
\draw[xshift=7.75cm,yshift=-0.9cm] node{$\pi_3$};
\draw[xshift=9.25cm,yshift=-0.9cm] node{$1-\pi_3$};
\draw[xshift=10.75cm,yshift=-0.9cm] node{$0$};
\draw[xshift=12.25cm,yshift=-0.9cm] node{$0$};



\draw (0') .. controls +(50:1cm) and +(130:1cm) .. (1');
\draw (1') .. controls +(50:1cm) and +(130:1cm) .. (2');
\draw (2') .. controls +(50:1cm) and +(130:1cm) .. (3');
\begin{scope}[dashed]
\draw (3') .. controls +(50:1cm) and +(130:1cm) .. (4');
\draw (4') .. controls +(50:1cm) and +(130:1cm) .. (5');
\draw (5') .. controls +(50:1cm) and +(130:1cm) .. (6');
\draw (7') .. controls +(50:1cm) and +(130:1cm) .. (8');
\end{scope}
\draw (6') .. controls +(50:1cm) and +(130:1cm) .. (7');

\draw[xshift=1.75cm,yshift=-2.1cm] node{$-1$};
\draw[xshift=3.25cm,yshift=-2.1cm] node{$-1$};
\draw[xshift=4.75cm,yshift=-2.1cm] node{$-\pi_2$};
\draw[xshift=6.25cm,yshift=-2.1cm] node{$\pi_2-1$};
\draw[xshift=7.75cm,yshift=-2.1cm] node{$-1$};
\draw[xshift=9.25cm,yshift=-2.1cm] node{$-1$};
\draw[xshift=10.75cm,yshift=-2.1cm] node{$-\pi_4$};
\draw[xshift=12.25cm,yshift=-2.1cm] node{$\pi_4-1$};

\draw (8') .. controls +(-130:1cm) and +(-50:1cm) .. (7');
\draw (7') .. controls +(-130:1cm) and +(-50:1cm) .. (6');
\begin{scope}[red]
\draw (6') .. controls +(-130:1cm) and +(-50:1cm) .. (5');
\end{scope}
\draw (5') .. controls +(-130:1cm) and +(-50:1cm) .. (4');
\draw (4') .. controls +(-130:1cm) and +(-50:1cm) .. (3');
\draw (3') .. controls +(-130:1cm) and +(-50:1cm) .. (2');
\begin{scope}[red,dashed]
\draw (2') .. controls +(-130:1cm) and +(-50:1cm) .. (1');
\end{scope}
\begin{scope}[dashed]
\draw (1') .. controls +(-130:1cm) and +(-50:1cm) .. (0');
\end{scope}

\draw[xshift=1.75cm,yshift=-3.9cm] node{$0$};
\draw[xshift=3.25cm,yshift=-3.9cm] node{$1-\pi_2$};
\draw[xshift=4.75cm,yshift=-3.9cm] node{$\pi_2$};
\draw[xshift=6.25cm,yshift=-3.9cm] node{$0$};
\draw[xshift=7.75cm,yshift=-3.9cm] node{$0$};
\draw[xshift=9.25cm,yshift=-3.9cm] node{$1-\pi_4$};
\draw[xshift=10.75cm,yshift=-3.9cm] node{$\pi_4$};
\draw[xshift=12.25cm,yshift=-3.9cm] node{$0$};

\if{
\draw (1) -- (2');
\draw (2) -- (3');
\draw (5) -- (6');
\draw (6) -- (7');
}\fi

\draw[xshift=4cm,yshift=-1.5cm] node{sol1};
\draw[xshift=10cm,yshift=-1.5cm] node{sol2};

\end{tikzpicture}
\caption{\label{f:last} The case of two solitons: ``dangerous'' edges} 
\end{figure}
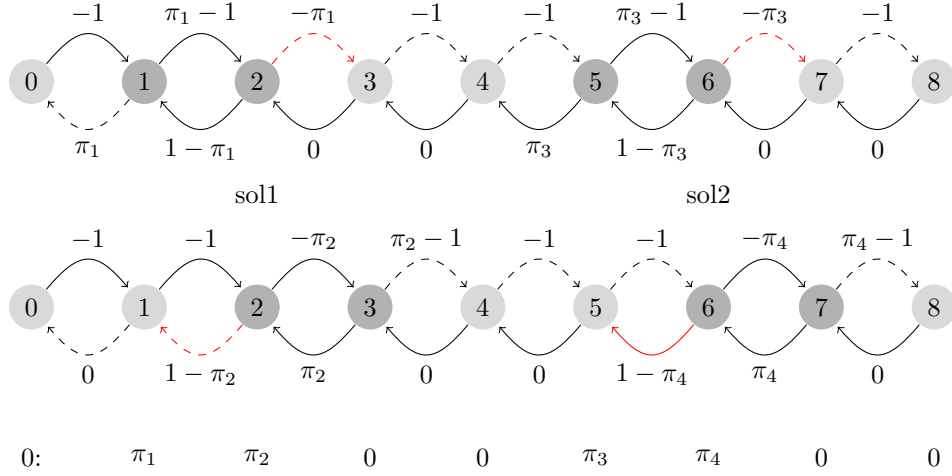

Here the bold edges are used by the pair of
fundamental eigenvectors corresponding to the first soliton $(\pi_1,\;\pi_2)$. The red edges are the ones which
lead to a contradiction with~\eqref{backward} (graph $\digr(\gamma)$, upper part of the figure) 
or~\eqref{forward} (graph $\digr(\delta)$, lower part of the figure). We see that the pair of fundamental eigenvectors
has to use one of the forbidden edges, hence it cannot satisfy both~\eqref{backward} and~\eqref{forward}.

\section{Conclusions and projects}

In this paper we attempted to build the max-plus theory of~\eqref{e:willox}. Based on the observation that the first two equations
represent max-plus spectral problems, we explained how the finite-dimensional max-plus spectral theory applies to them.
We studied pairs of fundamental eigenvectors associated with each soliton, describing the undressing transform and showing
that these pairs yield a solution of~\eqref{e:willox} in some situations.

The remaining nontrivial case is when $U^{(t)}$ has several massive solitons, where we have shown that the pairs of fundamental
eigenvectors violate the last two equations of~\eqref{e:willox}. Willox et al.~\cite{Wil+} report that a solution
can be found also in this nontrivial case. It is desirable to work out a systematic comprehensive approach to solving~\eqref{e:willox}
in this case, and in particular, to understand whether a max-plus linear combination of fundamental pairs could be a solution.
Then one could proceed with the study of undressing transform associated with any solution of~\eqref{e:willox}, and the details of application of this theory to solving the ultradiscrete KdV equation~\eqref{e:cell-aut}.

\section{Acknowledgement}

The author is grateful to Jonathan Nimmo and Ralph Willox for introducing him into the subject, and
to St\'{e}phane Gaubert for giving him an idea how the max-plus spectral theory could be applied to~\eqref{e:willox}. The author wishes to thank the anonymous referee and Anna Kazeykina for careful reading and
good questions.


\end{document}